\theoremstyle{plain}
\newtheorem{X}{X}[section]
\newtheorem{Lem}[X]{Lemma}
\newtheorem{Thm}[X]{Theorem}
\newtheorem{Prop}[X]{Proposition}
\numberwithin{equation}{section}
\renewcommand{\le}{\ensuremath{\leqslant}}
\renewcommand{\ge}{\ensuremath{\geqslant}}
\newcommand{\br}[1]{\ensuremath{\left(#1\right)}} 
\newcommand{\Br}[1]{\ensuremath{\left\{#1\right\}}}   
\newcommand{\ab}[1]{\ensuremath{\vert#1\vert}} 
\newcommand{\abs}[1]{\ensuremath{\left\lvert#1\right\rvert}} 
\newcommand{\sums}[2]{\ensuremath{\sum_{\substack{#1 \\ #2}}}}
\newcommand{\sumss}[3]{\ensuremath{\sum_{\substack{#1 \\ #2 \\ #3}}}}
\newcommand{\prods}[2]{\ensuremath{\prod_{\substack{#1 \\ #2}}}}
\newcommand{\BigO}[1]{\ensuremath{O\br{#1}}} 
\newcommand{\Oh}{\ensuremath{O}} 
\newcommand{\littleo}[1]{\ensuremath{o\br{#1}}} 
\newcommand{\defeq}{\ensuremath{:=}} 
\newcommand{\eqdef}{\ensuremath{=:}} 
\newcommand{\li}[1]{\ensuremath{\mathrm{li}\br{#1}}} 
\newcommand{\dd}[1]{\ensuremath{\mathrm{d}#1}}
\title{Strings of congruent primes in short intervals II}
\author{Tristan Freiberg}
\address{Institutionen f\"or matematik, KTH, 100 44 Stockholm, Sweden}
\email{tristanf@kth.se}
\thanks{The author is supported by the G\"oran Gustafsson Foundation (KVA)}
\begin{document}

\begin{abstract}
Let $p_1 = 2, p_2 = 3,\ldots$ be the sequence of all primes. Let $\epsilon$ be an arbitrarily small but fixed positive number, and fix a coprime pair of integers $q \ge 3$ and $a$. We will establish a lower bound for the number of primes $p_r$, up to $X$, such that both $p_{r+1} - p_{r} < \epsilon \log p_r$ and $p_{r} \equiv p_{r+1} \equiv a \bmod q$ simultaneously hold. As a lower bound for the number of primes satisfying the latter condition, the bound we obtain improves upon a bound obtained by D.~Shiu.
\end{abstract}

\maketitle

\section{Introduction}\label{Section 1: Introduction}

Let $p_1 = 2, p_2 = 3,\ldots$ be the sequence of all primes, and let $\epsilon$ be an arbitrarily small but fixed positive number. In 2005 \cite{GPY2005, GPY2009}, Goldston, Pintz, and Y{\i}ld{\i}r{\i}m made a significant breakthrough by proving that $p_{r+1} - p_r < \epsilon\log p_r$ for infinitely many pairs $p_r, p_{r+1}$ of primes. That is, for infinitely many $r$, the $r$th prime gap, $p_{r+1} - p_{r}$, is arbitrarily small compared to the `expected' gap of $\log p_r$. In 2006 \cite{GPY2006} they extended their method to prove an analogous result for primes in arithmetic progressions. Thus, given a coprime pair of integers $q \ge 3$ and $a$, if $p'_1 < p'_2 < \cdots$ is the sequence of all primes congruent to $a \bmod q$, then for infinitely many pairs $p'_{m}, p'_{m+1}$, we have $p'_{m+1} - p'_{m} < \epsilon\log p_m$. 

Given any such pair $p'_{m}, p'_{m+1}$, there may or may not be a third prime $p$, not congruent to $a \bmod q$, such that $p'_{m} < p < p'_{m+1}$. Thus, \emph{either} there are infinitely many triples of primes $p_{r},p_{r+1},p_{r+2}$, not necessarily in the same arithmetic progression mod $q$, such that $p_{r+2} - p_r < \epsilon\log p_r$; \emph{or} there are infinitely many pairs of consecutive primes $p_{r}, p_{r+1}$ such that both $p_{r+1} - p_{r} < \epsilon \log p_r$ and $p_{r} \equiv p_{r+1} \equiv a \bmod q$ simultaneously hold. Presumably both statements are true, but one can only deduce that one of them is true, and one does not know which one, from the result in \cite{GPY2006}.

Although we would like to prove the first statement, unfortunately it seems beyond reach of the method of Goldston, Pintz, and Y{\i}ld{\i}r{\i}m, at least at present. (See \cite[\S 1, Question 3]{GPY2009}.) It is natural, then, to ask whether one can at least prove the second statement. In so doing, one would establish a conjecture of Chowla that there are infinitely many pairs of consecutive primes $p_{r}, p_{r+1}$ such that $p_{r} \equiv p_{r+1} \equiv a \bmod q$. This conjecture was in fact already proved by D.~Shiu in 2000 \cite{S2000}. 

As it turns out, the ideas of Shiu can be combined with those of Goldston, Pintz, and Y{\i}ld{\i}r{\i}m to prove that there are indeed infinitely many pairs of consecutive primes $p_{r}, p_{r+1}$ such that both $p_{r+1} - p_{r} < \epsilon \log p_r$ and $p_{r} \equiv p_{r+1} \equiv a \bmod q$ simultaneously hold. We did this in \cite{F2010}, where we also obtained a very weak quantitative result \cite[\S 7]{F2010}: there is a positive constant $A = A(q)$, depending only on $q$, such that for all sufficiently large $X$, 
\begin{align}\label{(1.1)}
 \sumss{p_{r} \le X}{p_{r+1} - p_{r} < \epsilon \log p_{r}}{p_{r} \equiv p_{r+1} \equiv a \bmod q}
\hspace{-5pt} 1 \ge X^{1/3(\log\log X)^A}.
\end{align}
Our purpose here is to improve this lower bound to the following:
\begin{Thm}\label{Theorem 1.1}
 Let $p_1 = 2, p_2 = 3,\ldots$ be the sequence of all primes. Fix any positive number $\epsilon$, and fix a pair of coprime integers $q \ge 3$ and $a$. There is an absolute positive constant $c$ such that, for all sufficiently large $X$,
\begin{align}\label{(1.2)}
 \sumss{p_{r} \le X}{p_{r+1} - p_{r} < \epsilon \log p_{r}}{p_{r} \equiv p_{r+1} \equiv a \bmod q}
\hspace{-5pt} 1 \ge X^{1 - c/\log\log X}.
\end{align}
\end{Thm}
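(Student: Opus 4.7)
The plan is to combine a Goldston--Pintz--Y{\i}ld{\i}r{\i}m (GPY) weighted sieve for primes in arithmetic progressions, as in \cite{GPY2006}, with a Shiu-style sieve construction \cite{S2000} that confines the primes to the residue class $a \bmod q$ and simultaneously forces consecutivity; the quantitative improvement over \eqref{(1.1)} will come from letting the ``small primes'' sieve parameter range up to a constant multiple of $\log X/\log\log X$. Explicitly, choose $z = c_1 \log X/\log\log X$, let $W = \prod_{p \le z,\, p \nmid q} p$ and $Q = qW$, so that by the prime number theorem $\log Q \asymp \log X/\log\log X$ and $Q \le X^{O(1/\log\log X)}$. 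Select an admissible $k$-tuple $H = \{h_1 < \dots < h_k\} \subset (0, \epsilon \log X]$ in which every $h_i$ is a multiple of $q$, and then locate, via a Jacobsthal/CRT-type argument, a residue class $b \bmod Q$ satisfying (i) $b \equiv a \bmod q$; (ii) $\gcd(b + h_i, Q) = 1$ for each $i$; (iii) $\gcd(b+j, W) > 1$ for every $j \in [0, h_k] \setminus H$. Then, for any $n \equiv b \bmod Q$ with $n > z$, each $n + j$ with $j \in [0, h_k] \setminus H$ is automatically composite, while every $n + h_i$ lies in the class $a \bmod q$.

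\textbf{Applying GPY and counting.} Restrict $n \le X$ to the progression $b \bmod Q$ and carry out the weighted-sieve argument of \cite{GPY2006}: weight $n$ by a truncated divisor sum $\Lambda_R(n;H)^2$ and compare the weighted count of primes in the shifts $\{n + h_i\}$ to the unweighted total. A Bombieri--Vinogradov-type estimate supplies the requisite level of distribution for primes in APs, and the GPY main-term analysis then yields a lower bound of order $X/(Q(\log X)^{O(1)})$ for the number of $n \equiv b \bmod Q$ such that at least two of $n + h_1, \dots, n + h_k$ are simultaneously prime. By the design of $b$ in step one, any two such primes are consecutive, differ by at most $\epsilon \log X$, and are both congruent to $a \bmod q$, and distinct values of $n$ produce distinct ordered pairs $(p_r, p_{r+1})$. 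Since $Q \le X^{O(1/\log\log X)}$ and the $(\log X)^{O(1)}$ loss is negligible against $X^{-c/\log\log X}$, the count comfortably exceeds $X^{1 - c/\log\log X}$, giving \eqref{(1.2)}.

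\textbf{Main obstacle.} The principal difficulty is in carrying out the construction of $b$ with $z$ as large as a constant times $\log X/\log\log X$: one must use the small primes $p \le z$ to cover \emph{all} residues $b + j \bmod p$ for $j \in [0, h_k] \setminus H$, while simultaneously keeping the $k+1$ residues $b, b+h_1, \dots, b+h_k$ coprime to $W$ and respecting $b \equiv a \bmod q$. This is a Jacobsthal-function-type problem, and the joint flexibility in choosing $H$ and $b$ will be essential; it is presumably this step that cost the factor $(\log\log X)^{-A}$ in \eqref{(1.1)}, so its optimization is the crux of the present improvement. A secondary technicality is to obtain the GPY lower bound \emph{quantitatively} and uniformly as $Q$ grows to the allowed size, which will likely require Bombieri--Vinogradov inside the GPY-in-AP framework applied along the fixed large modulus $Q$, rather than an averaged version.
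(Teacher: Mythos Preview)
Your proposal has a genuine gap, and the route is substantially different from the paper's.

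The covering step (iii) cannot be carried out with your parameters. You are asking that the residue classes $-b \bmod p$, $p\le z$, cover all of $[0,h_k]$ except for $k$ points. Since the GPY machinery at level $\theta=\tfrac12$ (Bombieri--Vinogradov) only yields two primes when the shift set has diameter a positive multiple of $\log X$, you are forced to take $h_k\asymp\epsilon\log X$; with $z=c_1\log X/\log\log X$ this means covering an interval of length $\asymp z\log z$ by one residue class per prime $p\le z$. This is exactly the Erd\H{o}s--Rankin problem, and every known construction (Rankin's, and even the later Ford--Green--Konyagin--Maynard--Tao improvements) covers only intervals of length $o(z\log z)$. So (iii) is not known to be achievable, and no amount of flexibility in choosing $H$ changes this, since removing a bounded number of points from the target interval is immaterial. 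There is also a second issue: with a single fixed $k$-tuple and level $\tfrac12$, the GPY weight $\Lambda_R(n;H)^2$ does \emph{not} force two primes in the tuple; \cite{GPY2006} obtains two primes only by averaging the weight over all $k$-subsets of a growing set $S$ of shifts.

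The paper proceeds differently. It does not attempt to sieve out every wrong residue. Following Shiu, it \emph{removes} certain primes from the GPY modulus so that, among $h\in[1,H]$ with $(Q,h)=1$, the set $T$ of those with $h\not\equiv a\bmod q$ satisfies $|T|=o(|S|)$; the GPY argument is then run, averaged over $k$-subsets of $S$, and the contribution of $T$ is subtracted. The passage from \eqref{(1.1)} to \eqref{(1.2)} does not come from pushing a sieve parameter $z$ at all: it comes from a Selberg--Delange asymptotic (Lemma~\ref{Lemma 3.3}) counting integers composed only of primes $\equiv 1\bmod q$ and $>\log H$, which upgrades the bound $|S|\gg_q H\phi(Q)/Q$ from holding on a sparse sequence of $H$ (as in \cite{F2010}) to holding for \emph{all} large $H$.
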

As a lower bound for the number of primes $p_{r}$ up to $X$ for which $p_{r} \equiv p_{r+1} \equiv a \bmod q$, \eqref{(1.2)} is, once $X$ is sufficiently large, greater than that obtained by Shiu \cite[Theorem 2]{S2000}, namely $X^{1 - \varepsilon(X)}$, where
\begin{align*}
\varepsilon(X) = C_1(q)\br{\frac{\log\log\log X}{\log\log X}}^{1/\phi(q)}
\end{align*}
if $a \equiv \pm 1 \bmod q$, and
\begin{align*}
\varepsilon(X) = C_2(q)\br{\frac{(\log\log\log X)^2}{(\log\log X)(\log\log\log\log X)}}^{1/\phi(q)}
\end{align*}
otherwise. (Here, $C_1(q)$ and $C_2(q)$ are constants depending only on $q$.)

\section{Discussion}\label{Section 2: Discussion}

The way to incorporate the ideas of Shiu into the work of Goldston, Pintz, and Y{\i}ld{\i}r{\i}m is explained in \cite[\S 2]{F2010}. Basically, Goldston, Pintz, and Y{\i}ld{\i}r{\i}m \cite{GPY2006} proved that for all sufficiently large $N$, there is at least one integer\footnote{In fact, a lower bound for the number of such integers $n$, of the form $N/(\log N)^c$, $c$ a positive constant, is implicit in the work of Goldston, Pintz, and Y{\i}ld{\i}r{\i}m.} $n \in (N,2N]$ such that there are at least two primes of the form $Qn+h$, where: $Q$ is a multiple of $q$ such that $\log QN \sim \log N$; $h$ is in the set 
\begin{align*}
S = S(H) \defeq \{1 \le h \le H : \text{$(Q,h) = 1$ and $h \equiv a \bmod q$}\};
\end{align*}
and $H = \epsilon\log N$. Goldston, Pintz, and Y{\i}ld{\i}r{\i}m \cite[(2.1) --- (2.4)]{GPY2006} took\footnote{Actually, if there happens to be an exceptional modulus $q_0 \le N^{1/(\log\log N)^2}$, and if $p_0$ is its greatest prime factor, we remove $p_0$ from the product defining $Q$, so that $(Q,p_0) = 1$. See \cite[Lemma 2]{GPY2006} and \cite[\S 5]{F2010} for details. We overlook this technical complication for the purposes of simplifying the present discussion.}  
\begin{align*}
Q = Q(H) \defeq q\prod_{p \in \mathscr{P}} p,
\end{align*}
where 
\begin{align*}
\mathscr{P} = \mathscr{P}(H) \defeq \{p \le H/(\log H)^2\},
\end{align*}
but if we remove from $\mathscr{P}$ any subset of the primes in the interval $(\log H, H/(\log H)^2]$, the key estimates \cite[Proposition 3.2]{F2010} still hold, with one exception --- namely, we do not necessarily have \cite[(2.2)]{F2010}: $\ab{S} \gg_q H\phi(Q)/Q$.

Our goal is to remove primes from $\mathscr{P}$ in such a way that we have the following for the resulting $Q$: almost all of the integers $h \in [1,H]$ that are coprime with $Q$ are congruent to $a \bmod q$, in the sense that if 
\begin{align*}
T = T(H) \defeq \{1 \le h \le H : \text{$(Q,h) = 1$ and $h \not\equiv a \bmod q$}\}, 
\end{align*}
then $\ab{T} = \littleo{\ab{S}}$ as $H \to \infty$; and $\ab{S} \gg_q H\phi(Q)/Q$ for all sufficiently large $H$. Since $Qn+h$ is prime only if $(Q,h) = 1$, we could deduce from this that, for infinitely many of those $n$ for which $(Qn,Qn+H]$ contains at least two primes congruent to $a \bmod q$, among those primes is a pair of consecutive primes. Indeed, we would be able to establish \eqref{(1.2)}. (See \cite[\S 4, \S 7]{F2010} for details.)

Based on a construction used by Shiu in \cite{S2000}, we defined such a set $\mathscr{P}$ in \cite[\S 6.2]{F2010} (also see \eqref{(3.1)} --- \eqref{(3.4)} below). In fact, denoting by $\mathscr{P}' = \mathscr{P}'(H)$ the set considered by Shiu, we have $\mathscr{P} = \mathscr{P}' \cup \{p \le \log H : p \equiv 1 \bmod q\}$. Since $\mathscr{P}'$ is defined in such a way that it consists only of primes up to $H/(\log H)^2$, and contains all primes $p \not\equiv 1 \bmod q$ up to $\log H$, $\mathscr{P}$ consists only of primes up to $H/(\log H)^2$ and, in particular\footnote{The fact that $\mathscr{P}$ contains all primes up to $\log H$ is used to show that, for a given $k$-tuple of linear forms $\mathcal{H} = \{Qx+h_1,\ldots Qx+h_k\}$, $h_i \in [1,H]$, $(Q,h_1\cdots h_k) = 1$, we have $\mathfrak{S}(\mathcal{H}) \sim (Q/\phi(Q))^k$ as $H \to \infty$, where $\mathfrak{S}(\mathcal{H})$ is the singular series for $\mathcal{H}$. See \cite[Lemma 5.1]{F2010} for details.}, all primes up to $\log H$.

However, in \cite{F2010}, we were only able to establish the following: $\ab{T} \ll H/\log H$ for all sufficiently large $H$; $H/\log H = \littleo{H\phi(Q)/Q}$; and there is a positive constant $A$, depending only on $q$, such that for all sufficiently large $Y$, there is some $H \in [Y/(\log Y)^{A},Y]$ for which $\ab{S} \gg_q H\phi(Q)/Q$. From this we deduced \eqref{(1.1)} in \cite[\S 7]{F2010}.

The reason we were not able to establish that $\ab{S} \gg_q H\phi(Q)/Q$ for \emph{all} sufficiently large $H$ in \cite{F2010} is that we used \cite[Lemma 2]{S2000} (Lemma 6.2 in \cite{F2010}): an asymptotic for the number of integers up to $H$ that are composed only of primes congruent to $1 \bmod q$. Defining $Q' = Q'(H)$ and $S' = S'(H)$ analogously to $Q$ and $S$, but with $\mathscr{P}'$ in place of $\mathscr{P}$, Shiu used this asymptotic to show that $\ab{S'} \gg_q H\phi(Q')/Q'$ for all sufficiently large $H$. In \cite[\S 6]{F2010}, we took this as our starting point, and then dealt with the extra primes $\{p \le \log H : p \equiv 1 \bmod q\}$ in $\mathscr{P}$.

What we need is an asymptotic for the number of integers, up to $H$, that are composed only of primes both congruent to $1 \bmod q$ \emph{and} greater than $\log H$. Much of this note is devoted to establishing such a result (Lemma \ref{Lemma 3.3} below). Using this we are able to show that $\ab{S} \gg_q H\phi(Q)/Q$ for all sufficiently large $H$. Indeed, using Lemma \ref{Lemma 3.4} (below) instead of \cite[Lemma 6.5]{F2010} in \cite[\S 7]{F2010}, we are able to establish Theorem \ref{Theorem 1.1}.

We will show that the inequalities in Lemma \ref{Lemma 3.4} hold for $q$ in a certain range depending on $H$. This uniformity is not needed to prove Theorem \ref{Theorem 1.1}, but it can be used to prove a version of Theorem \ref{Theorem 1.1} in which $q$ is allowed to tend very slowly to infinity with $X$. It is hoped to publish an account of this, in which we will also consider `strings' of more than $2$ congruent primes --- in longer intervals.

\section{Proof of Theorem 1.1}\label{Section 2: Proof of Theorem 1.1}

Throughout this section, at each and every occurrence of $\Oh$ and $\ll$, the implied constant is absolute. The letter $c$, by itself, always denotes an absolute positive constant, possibly a different constant at each occurrence. 

Theorem \ref{Theorem 1.1} will follow from Lemma \ref{Lemma 3.4}, below. Lemma \ref{Lemma 3.4} is a corollary of: Theorem \ref{Theorem 3.1}, which is a version of the Siegel-Walfisz theorem; Lemma \ref{Lemma 3.2}, which is a version of Mertens' theorem in which the primes are restricted to the arithmetic progression $1 \bmod q$; and Lemma \ref{Lemma 3.3}, which gives an asymptotic for the number of integers, up to $X$, composed only of primes that are both congruent to $1 \bmod q$ and greater than a power of $\log X$. 

In each of the lemmas below, the estimates are shown to hold uniformly for $q$ in a certain range. We do not need this uniformity to prove Theorem \ref{Theorem 1.1} --- it would suffice to use the prime number theorem for arithmetic progressions instead of Theorem \ref{Theorem 3.1}, and versions of Lemmas \ref{Lemma 3.2}, \ref{Lemma 3.3}, and \ref{Lemma 3.4} in which $q$ is arbitrary but bounded.  

We use the Siegel-Walfisz theorem, in the following form, in the proofs of Lemmas \ref{Lemma 3.2} and \ref{Lemma 3.4}:
\begin{Thm}[Siegel-Walfisz]\label{Theorem 3.1}
Fix a positive number $A$. For all sufficiently large $X$ we have, uniformly for integers $q$ satisfying $1 \le q \le (\log X)^{A}$, the following estimate:
\begin{align*}
\sums{p \le X}{p \equiv 1 \bmod q} 1 = \br{1 + \BigO{\frac{1}{\log X}}} \frac{X}{\phi(q)\log X}.
\end{align*}
\end{Thm}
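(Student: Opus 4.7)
The plan is to derive this from the classical Siegel--Walfisz theorem in its usual form and then convert the main term. The classical statement (see, e.g., Davenport's \emph{Multiplicative Number Theory}, Ch.~22) asserts that for any fixed $A > 0$ there is an (ineffective) constant $c_A > 0$ such that, uniformly for $1 \le q \le (\log X)^A$ and $(a,q) = 1$,
\begin{align*}
\pi(X;q,a) = \frac{\li{X}}{\phi(q)} + \BigO{X\exp\br{-c_A\sqrt{\log X}}}.
\end{align*}
The real work behind this is Siegel's theorem on the absence of a zero of $L(s,\chi)$ close to $s=1$ for real non-principal characters $\chi \bmod q$, combined with the standard zero-free region for Dirichlet $L$-functions. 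So the first step is simply to quote this.

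Second, I would use the asymptotic $\li{X} = X/\log X + \BigO{X/(\log X)^2}$, which comes from one integration by parts on $\int_{2}^{X} \dd{t}/\log t$. Substituting in, and specialising to $a = 1$,
\begin{align*}
\sums{p \le X}{p \equiv 1 \bmod q}1 = \frac{X}{\phi(q)\log X} + \BigO{\frac{X}{\phi(q)(\log X)^2}} + \BigO{X\exp\br{-c_A\sqrt{\log X}}}.
\end{align*}

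Third, I would absorb the Siegel--Walfisz error into the shape required. Because $\phi(q) \le q \le (\log X)^A$,
\begin{align*}
\phi(q)(\log X)\exp\br{-c_A\sqrt{\log X}} \le (\log X)^{A+1}\exp\br{-c_A\sqrt{\log X}} \ll \frac{1}{\log X}
\end{align*}
once $X$ is large enough (depending on $A$), since $\exp(-c_A\sqrt{\log X})$ decays faster than any fixed power of $\log X$. Multiplying through by $X/(\phi(q)\log X)$, the exponential error is dominated by $X/(\phi(q)(\log X)^2)$, which gives exactly the claimed form.

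The only genuine obstacle is the appeal to Siegel's theorem (and the resulting ineffectivity of $c_A$); the rest is bookkeeping. I would not try to reprove Siegel--Walfisz from scratch, since it would occupy far more space than the present note warrants and the statement is completely standard at this level of uniformity.
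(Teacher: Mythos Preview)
Your proposal is correct and follows essentially the same route as the paper: quote the classical Siegel--Walfisz estimate $\pi(X;q,a) = \li{X}/\phi(q) + O(X\exp(-c_A\sqrt{\log X}))$, replace $\li{X}$ by $X/\log X + O(X/(\log X)^2)$, and absorb the exponential error. The paper simply cites Montgomery--Vaughan for the input and omits the explicit bookkeeping you carried out, but the argument is the same.
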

\begin{proof}
Indeed, we have \cite[\S 11.3, Corollary 11.20]{MV2007}:
\begin{align*}
\sums{p \le X}{p \equiv a \bmod q} 1 = \frac{\li{X}}{\phi(q)} + \BigO{X\exp\br{-C_A\sqrt{\log X}}},
\end{align*}
uniformly for $1 \le q \le (\log X)^A$ and integers $a$ coprime with $q$, where $C_A$ is a positive constant depending only on $A$. The less precise and less general statement of Theorem \ref{Theorem 3.1}, which follows since $\li{X} = X/\log X + \BigO{X/(\log X)^2}$, is sufficient for our purposes.
\end{proof}

We will use the following version of Mertens' theorem in the proof of Lemma \ref{Lemma 3.4}:
\begin{Lem}\label{Lemma 3.2}
Fix a positive number $A$. For all sufficiently large $X$ we have, uniformly for integers $q$ satisfying $1 \le q \le (\log X)^{A}$, the following estimate:
\begin{align*}
 \prods{p \le X}{p \equiv 1 \bmod q} \br{1 - \frac{1}{p}}^{-1} = 
 \br{1 + \BigO{\frac{1}{\log X}}}  e^{\gamma/\phi(q)} c(q)(\log X)^{\frac{1}{\phi(q)}},
\end{align*}
where $\gamma = 0.57721\ldots$ is the Euler-Mascheroni constant, and $c(q)$ is a positive constant depending only on $q$. We have $c(1) = 1$ and $c(2) = 1/2$. 
\end{Lem}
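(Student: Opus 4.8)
The plan is to derive this restricted Mertens estimate from the Siegel–Walfisz theorem (Theorem \ref{Theorem 3.1}) by partial summation, in close analogy with the classical proof of Mertens' third theorem. First I would pass to logarithms: writing
\begin{align*}
\log\prods{p \le X}{p \equiv 1 \bmod q}\br{1 - \frac1p}^{-1} = \sums{p \le X}{p \equiv 1 \bmod q}\sum_{k \ge 1}\frac{1}{kp^k},
\end{align*}
I would split off the $k = 1$ term. The tail $\sum_{k \ge 2}$, summed over all primes $p \equiv 1 \bmod q$ (equivalently, over all primes, or even all integers $p \ge 2$), converges to a quantity I will absorb into $\log c(q)$; the contribution of primes exceeding $X$ to this tail is $\BigO{1/X}$, which is swallowed by the $\BigO{1/\log X}$ error. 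So the whole problem reduces to an asymptotic for $\sum_{p \le X,\, p\equiv 1 (q)} 1/p$.

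For that sum I would apply partial summation to the counting function $\pi(X;q,1) = \sum_{p\le X,\,p\equiv 1(q)} 1$, for which Theorem \ref{Theorem 3.1} gives $\pi(X;q,1) = \li{X}/\phi(q) + \BigO{X\exp(-C_A\sqrt{\log X})}$ uniformly for $q \le (\log X)^A$. Partial summation then yields
\begin{align*}
\sums{p \le X}{p \equiv 1 \bmod q}\frac1p = \frac{1}{\phi(q)}\int_2^X \frac{\dd t}{t\log t} + \br{\text{convergent constant}} + \BigO{\frac{1}{\log X}},
\end{align*}
where the main integral is $\frac{1}{\phi(q)}\log\log X + \BigO{1/\phi(q)}$ and the uniformity in the error is controlled because the Siegel–Walfisz error term, divided by $t(\log t)^2$ and integrated, is $\BigO{\exp(-c\sqrt{\log X})} = \BigO{1/\log X}$, uniformly in the admissible range of $q$. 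I must be slightly careful near $t = 2$, where $\log t$ is small and the main term of Siegel–Walfisz is not yet valid: I would handle primes below some fixed threshold (or below $q$) separately as a bounded contribution folded into the constant, which is harmless since the claimed constant $c(q)$ is allowed to depend on $q$. Exponentiating, the $\frac{1}{\phi(q)}\log\log X$ term becomes $(\log X)^{1/\phi(q)}$, the $\gamma$ arises exactly as in the classical argument from comparing $\sum_{p \le X} 1/p$ with $\log\log X$ (here scaled by $1/\phi(q)$), and the remaining constants collect into $e^{\gamma/\phi(q)} c(q)$ with $c(q) > 0$.

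The main obstacle, and the one point deserving genuine care, is tracking the dependence on $q$ throughout: I need the error term to be $\BigO{1/\log X}$ with an \emph{absolute} implied constant, uniformly for $q \le (\log X)^A$, so every step — the convergence of the $k\ge2$ tail, the partial summation, the treatment of small primes — must be estimated with constants not depending on $q$, using only $\phi(q) \ge 1$ and $q \le (\log X)^A$. The identification of $c(q)$ as a genuine constant (rather than something that could degenerate) follows because the product $\prod_{p \equiv 1(q)}(1-1/p)^{-1}(\ldots)$ telescopes against the Euler-product expression for $\zeta$-type $L$-functions in the usual way, but for the purposes of this lemma it suffices to define $c(q)$ by the limiting relation and check positivity; the explicit values $c(1) = 1$, $c(2) = 1/2$ come from specializing the classical Mertens product and the identity $\prod_{p \text{ odd}}(1-1/p)^{-1} \sim \frac12 e^{\gamma}\log X / \,$ (the even prime contributes the factor $2$), respectively.
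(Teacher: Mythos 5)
Your argument is correct and gives a genuinely different, more elementary proof than the paper's. You carry out partial summation directly at $\sigma=1$, using the Siegel--Walfisz count $\pi(t;q,1)$ to obtain $\sum_{p\le X,\,p\equiv 1\bmod q}1/p = \frac{1}{\phi(q)}\log\log X + B(q) + \BigO{1/\log X}$ uniformly in $q$, then exponentiate. The paper instead follows Hardy's proof of Mertens' theorem: it works first at $\sigma > 1$, uses the character-orthogonality identity to write $\sum_{p\equiv 1\bmod q}\log(1-p^{-s})^{-1} = \log\Theta(s) + \frac{1}{\phi(q)}\log\prod_{\chi}L(s,\chi)$, reads the constant off from $\zeta(s) = \frac{1}{s-1}+\gamma + \BigO{\abs{s-1}}$ together with Proposition \ref{Proposition 4.1}, and passes to $\sigma=1$ by Landau's limit-interchange theorem (Proposition \ref{Proposition 4.2}) supplemented by Siegel--Walfisz to control the tail of the integral. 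What Hardy's route buys that yours does not is the explicit formula $c(q) = \Theta(1)\br{\frac{\phi(q)}{q}\prod_{\chi\ne\chi_0}L(1,\chi)}^{1/\phi(q)}$ of \eqref{(4.12)}; this matters because the proof of Lemma \ref{Lemma 3.3} later uses that formula, together with the $L(1,\chi)$ lower bounds \eqref{(4.16)}, to establish $q^{1/2}\Gamma(1/\phi(q))/c(q)\ll q^2$ and hence uniformity in $q$. Your implicit $c(q)$ suffices for Lemma \ref{Lemma 3.2} as stated, but one sentence should be tightened: you write that ``the $\gamma$ arises exactly as in the classical argument from comparing $\sum_{p\le X}1/p$ with $\log\log X$,'' yet partial summation alone produces only some constant $B(q)$, not $\gamma$; the identification of the Mertens constant in terms of $\gamma$ is precisely the nontrivial step that Hardy's computation supplies. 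In your setup $e^{\gamma/\phi(q)}$ should be treated purely as a normalisation: define $c(q)$ by dividing out $e^{\gamma/\phi(q)}$ from the exponentiated constant, and then $c(1)=1$ and $c(2)=1/2$ follow by comparison with the classical Mertens product, exactly as you note at the end.
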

\begin{proof}
The case $q = 1$ is Mertens' theorem, and the case $q = 2$ follows at once from this. We prove the result for $3 \le q \le (\log X)^A$ in \S \ref{Section 3}, where $c(q)$ is given explicitly.
\end{proof}

The following result, which reduces to \cite[Lemma 2]{S2000} in the case $Y = 1$ (and $q$ fixed), is the key that allows us to establish the inequalities in Lemma \ref{Lemma 3.4} for \emph{all} sufficiently large $H$, rather than just for a certain sequence of $H$ tending to infinity as in \cite[\S 6]{F2010}.
\begin{Lem}\label{Lemma 3.3}
Fix a positive number $A$ and a number $\alpha \in (0,\frac{1}{2})$. For all sufficiently large $X$ we have, uniformly for $Y$ satisfying $1 \le Y \le (\log X)^A$ and integers $q$ satisfying $3 \le q \le (\log X)^{\alpha}$, the following estimate: 
\begin{align*}
\sumss{n \le X}{p \mid n \Rightarrow p \equiv 1 \bmod q}{\text{and $p > Y$}} \hspace{-5pt} 1 & = 
\br{1 + \BigO{  \frac{  (\log\log X)^{c} } { {\phantom{^{1-}}} (\log X)^{1 - 2\alpha} }  } } \frac{c(q)}{\Gamma(1/\phi(q))} \cdot
\frac{X(\log X)^{\frac{1}{\phi(q)}}}{\log X} \hspace{-5pt} \prods{p \le Y}{p \equiv 1 \bmod q}\hspace{-5pt}\br{1 - \frac{1}{p}},
\end{align*}
where $c(q)$ is the positive constant, depending only on $q$, in the statement of Lemma \ref{Lemma 3.2}. 
\end{Lem}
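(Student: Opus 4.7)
The plan is to apply the Selberg--Delange method to the Dirichlet series
\[
F_Y(s) \defeq \sums{n \ge 1}{p \mid n \,\Rightarrow\, p > Y,\ p \equiv 1 \bmod q} n^{-s} \;=\; \prods{p > Y}{p \equiv 1 \bmod q}\br{1-p^{-s}}^{-1}.
\]
First I would factor out the branch singularity of $F_Y(s)$ at $s=1$. Character orthogonality yields, for $\mathrm{Re}(s) > 1$,
\[
\sums{p}{p \equiv 1 \bmod q} p^{-s} \;=\; \frac{1}{\phi(q)}\log\zeta(s) \;+\; \frac{1}{\phi(q)}\sum_{\chi \ne \chi_0}\log L(s,\chi) \;+\; \BigO{1},
\]
so upon exponentiating and splitting off the primes $p \le Y$,
\[
F_Y(s) \;=\; \zeta(s)^{1/\phi(q)}\,G(s)\,P_Y(s),
\]
where $G(s)$ is built from Dirichlet $L$-functions modulo $q$ and is holomorphic and non-vanishing in a classical Siegel--Walfisz zero-free region (after removing at most one exceptional modulus as in the footnote to the excerpt), while $P_Y(s) \defeq \prods{p \le Y}{p \equiv 1 \bmod q}(1-p^{-s})$ is a finite Euler product uniformly bounded on compacta in the half-plane $\mathrm{Re}(s) > \tfrac12$.

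Next I would pin down the leading constant. Letting $s \to 1^{+}$ in the factorisation above, from $\log F(s) = \sums{p}{p \equiv 1 \bmod q} p^{-s} + \BigO{1}$ together with $\log\zeta(s) = \log(1/(s-1)) + o(1)$ and Lemma \ref{Lemma 3.2}, a short computation gives $G(1) = c(q)$: the factor $e^{\gamma/\phi(q)}$ appearing in Lemma \ref{Lemma 3.2} is exactly cancelled by the Euler--Mascheroni contribution to the difference $\sum_p p^{-s} - \log\zeta(s)$, so no stray $\gamma$ survives. Hence
\[
G(1)P_Y(1) \;=\; c(q) \prods{p \le Y}{p \equiv 1 \bmod q}\br{1 - \frac{1}{p}}.
\]

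Then I would invoke a quantitative Selberg--Delange argument: apply Perron's formula truncated at some height $T$, and shift the contour around $s=1$ to a Hankel-type contour that tightly encircles the branch cut while remaining inside the zero-free region of $\prod_{\chi \bmod q} L(s,\chi)$. Using the classical identity $\frac{1}{2\pi i}\int_{\mathcal{H}} e^u\,u^{-z}\,\dd{u} = 1/\Gamma(z)$ on the Hankel contour, the leading contribution is
\[
\frac{G(1)P_Y(1)}{\Gamma(1/\phi(q))} \cdot \frac{X(\log X)^{1/\phi(q)}}{\log X},
\]
and the remaining pieces of the contour contribute the claimed error.

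The main obstacle is executing the contour argument \emph{uniformly} in both $q \le (\log X)^{\alpha}$ and $Y \le (\log X)^A$. Three points demand care. (i) One needs the zero-free region of $\prod_{\chi \bmod q} L(s,\chi)$ to be wide enough for $q$ as large as $(\log X)^{\alpha}$; the hypothesis $\alpha < \tfrac12$ is what allows the standard Siegel--Walfisz region to reach a disc of radius $\asymp 1/\log X$ around $s = 1$ and is ultimately responsible for the factor $(\log X)^{1-2\alpha}$ in the denominator of the error. (ii) The product $P_Y(s)$ and its logarithmic derivative must be bounded along the shifted contour uniformly for $Y \le (\log X)^A$, which is straightforward since $\abs{P_Y(s)}$ is majorised by $\prod_{p \le Y}(1+p^{-\mathrm{Re}(s)})$ and the number of factors is $\ll Y/\phi(q)\log Y$. (iii) The at most one exceptional Siegel zero is handled by the same deletion-of-a-single-prime device as in the footnote to the excerpt. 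Tracking the error constants through the Hankel computation produces the remaining $(\log\log X)^{c}$ factor in the error term.
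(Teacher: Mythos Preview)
Your proposal is essentially the Selberg--Delange argument carried out in the paper: the same factorisation $F_Y(s) = (s-1)^{-1/\phi(q)} G(s)$ with $G(1)P_Y(1) = c(q)\prod_{p \le Y,\ p\equiv 1}(1-1/p)$, Perron's formula truncated at height $T$, deformation to a Hankel contour around $s=1$, and the identification of the main term via $1/\Gamma(1/\phi(q))$. One correction to your point (iii): the deletion-of-a-single-prime device in the footnote concerns the construction of $Q$ in the GPY sieve, not the analytic continuation here; in the paper the possible exceptional zero is accommodated by working in the slit domain $\mathcal{D}^{*}$ of \eqref{(4.17)}, and what drives the exponent $1-2\alpha$ in the error is not the width of the zero-free region but the lower bound $L(1,\chi) \gg q^{-1/2}$ for real $\chi$ combined with the P\'olya--Vinogradov bound, which together give a relative error $\ll q^{2}(\log\log X)^{c}/\log X \le (\log\log X)^{c}/(\log X)^{1-2\alpha}$.
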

\begin{proof}
See \S \ref{Section 3}.
\end{proof}

Before stating Lemma \ref{Lemma 3.4}, we need some definitions. Let a sufficiently large number $H$, and a coprime pair of integers $q \ge 3$ and $a$, be given. If $a \equiv 1 \bmod q$, let
\begin{align}\label{(3.1)}
 \mathscr{P}(H) \defeq \{p \le \log H : p \equiv 1 \bmod q\} \cup \{p \le H/(\log H)^2 : p \not\equiv 1 \bmod q \}. 
\end{align}
If $a \not\equiv 1 \bmod q$, define
\begin{align}\label{(3.2)}
 t(H) \defeq \exp\br{\frac{(\log H)(\log\log\log H)}{2\log\log H}},
\end{align}
and, noting that $\log H < t(H) < H/t(H) < H/(\log H)^2$ for all sufficiently large $H$, let
\begin{align}\label{(3.3)}
\begin{split}
 \mathscr{P}(H) & \defeq 
\{p \le \log H : p \equiv 1 \bmod q\} \\ & \phantom{\defeq } \cup  
\{p \le H/(\log H)^2 : \text{$p \not\equiv 1 \bmod q$ and $p \not\equiv a \bmod q$}\} \\ & \phantom{\defeq  } \cup 
\{t(H) < p \le H/(\log H)^2 : p \equiv 1 \bmod q \} \\ & \phantom{\defeq  } \cup
\{p \le H/t(H) : p \equiv a \bmod q\}.
\end{split}
\end{align}
In other words, $\mathscr{P}(H)$ consists of all primes up to $H/(\log H)^2$, except for the primes
\begin{align*}
\{ \log H < p \le t(H) : p \equiv 1 \bmod q \} \cup \{H/t(H) < p \le H/(\log H)^2 : p \equiv a \bmod q \}.
\end{align*}
In either case, set
\begin{align}\label{(3.4)}
\tilde{Q} = \tilde{Q}(H;q,a) \defeq  q \prod_{p \in \mathscr{P}(H)} p, \qquad
Q = Q(H;q,a) \defeq  q \prods{p \in \mathscr{P}(H)}{p \ne p_0} p, 
\end{align}
where
\begin{align}\label{(3.5)}
 \text{$p_0 = 1$ or $p_0$ is a prime satisfying $p_0 > \log H$.}
\end{align}
(The minor technical complication of $p_0$ has to be accounted for in the proof of \cite[Theorem 1]{GPY2006}, and consequently in the proof of \cite[Theorem 1.1]{F2010}. It arises when taking into consideration the possible existence of Siegel zeros --- see \cite{GPY2006} for details.) Finally, set
\begin{align}\label{(3.6)}
 \begin{split}
S = S(H;q,a) & \defeq \{1 \le h \le H : \text{$(Q,h) = 1$ and $h \equiv a \bmod q$}\}; \\
T = T(H;q,a) & \defeq \{1 \le h \le H : \text{$(Q,h) = 1$ and $h \not\equiv a \bmod q$}\}.
\end{split}
\end{align}

\begin{Lem}\label{Lemma 3.4}
Given a sufficiently large number $H$, and a coprime pair of integers $q \ge 3$ and $a$, let $Q = Q(H;q,a)$, $S = S(H;q,a)$, and $T = T(H;q,a)$ be as defined in \eqref{(3.1)} --- \eqref{(3.6)}. (a) For all sufficiently large $H$ we have, for integers $q$ satisfying 
\begin{align*}
3 \le q \le \frac{\log\log H}{\log\log\log H}
\end{align*}
and $a \equiv 1 \bmod q$, the inequality
\begin{align}\label{(3.7)}
 \ab{S} - \ab{T} \ge \frac{H}{\Gamma(1/\phi(q))} \br{\frac{\phi(Q)}{Q}}.
\end{align}
(b) For all sufficiently large $H$ we have, for integers $q$ satisfying 
\begin{align*}
3 \le q \le \frac{\log\log H}{2\log\log\log H}
\end{align*}
and $a \not\equiv 1 \bmod q$ coprime with $q$, the inequality
\begin{align}\label{(3.8)}
 \ab{S} - \ab{T} \ge \frac{2}{5} \cdot \frac{H}{(1 + \phi(q))\Gamma(1/\phi(q))} \br{\frac{\phi(Q)}{Q}}.
\end{align}
\end{Lem}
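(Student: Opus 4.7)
The plan is to evaluate $|S|$ and $|T|$ by organising the integers counted according to their prime factorisation into the `allowed' prime pools (those coprime to $Q$), apply Lemma \ref{Lemma 3.3} for the smooth part and Theorem \ref{Theorem 3.1} for the prime part, and then reconcile the result with $\phi(Q)/Q$ via Lemma \ref{Lemma 3.2}. A key structural observation is that, since $(H/(\log H)^2)^2>H$ and $t(H)^2\ll H$ for all sufficiently large $H$, any $h\le H$ admits at most one prime factor in either of the pools of `large' primes, forcing the factorisations of $h\in S\cup T$ to be essentially a single `large' prime times a smooth cofactor composed of primes $\equiv 1 \bmod q$.

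In case (a), the primes coprime to $Q$ are those $\equiv 1 \bmod q$ with $p>\log H$ together with those $\not\equiv 1 \bmod q$ with $p>H/(\log H)^2$. Writing $h=AB$ with $A$ composed of primes from the first pool and $B\in\{1\}\cup\{\text{primes from the second pool}\}$, one has $h\equiv B \bmod q$, so
$$|S|=\#\{h\le H : p\mid h\Rightarrow p\equiv 1 \bmod q\text{ and } p>\log H\}$$
is computed directly by Lemma \ref{Lemma 3.3} with $X=H$ and $Y=\log H$, while
$$|T|=\sum_{\substack{p\not\equiv 1\bmod q\\H/(\log H)^2<p\le H}}M(H/p),$$
with $M$ the count of Lemma \ref{Lemma 3.3}. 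Theorem \ref{Theorem 3.1} applied to the outer sum and the trivial bound $M(H/p)\le H/p\le (\log H)^2$ on the inner one give $|T|=O(H(\log\log H)/\log H)$, much smaller than $|S|$.

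For case (b), the allowed primes form three pools: (i) primes $\equiv 1 \bmod q$ in $(\log H,t(H)]$; (ii) primes $\equiv a\bmod q$ in $(H/t(H),H/(\log H)^2]$; and (iii) all primes in $(H/(\log H)^2,H]$ coprime to $q$. Because a single pool-(ii) or pool-(iii) factor of class $a$ forces $h\equiv a\bmod q$, while two such factors would exceed $H$, $h\in S$ if and only if $h=pm$ with $p$ a prime $\equiv a\bmod q$ in $H/t(H)<p\le H$ and $m$ composed of primes from pool (i). Thus
$$|S|=\sum_{\substack{p\equiv a\bmod q\\H/t(H)<p\le H}}M^{*}(H/p),$$
with $M^{*}(X)$ counting such $m\le X$, which I would evaluate by Lemma \ref{Lemma 3.3} (valid since $H/p$ lies in the admissible range for most of the sum) combined with Theorem \ref{Theorem 3.1} on the outer sum. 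The analogous dissection for $T$ gives $|T|\le M^{*}(H)+\sum_{p\not\equiv a,\,p>H/(\log H)^2}M^{*}(H/p)$; the first term is $O(H/(\log H)^2)$ by a Dickman-type estimate (since $m$ is $t(H)$-smooth and $\log H/\log t(H)\to\infty$), and the second is $O(H(\log\log H)/\log H)$, both negligible compared to $|S|$.

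Finally, one evaluates $\phi(Q)/Q$ using Mertens together with Lemma \ref{Lemma 3.2}: since $\mathscr{P}(H)$ contains almost all primes up to $H/(\log H)^2$,
$$\prod_{p\in\mathscr{P}(H)}(1-1/p)\sim\frac{e^{-\gamma}}{\log H}\left(\frac{\log t(H)}{\log\log H}\right)^{1/\phi(q)}$$
in case (b) (with $\log H$ in place of $\log t(H)$ in case (a)), the pool-(ii) exclusions contributing only $1+o(1)$. After cancellation the ratio of $|S|$ to the target on the right of \eqref{(3.7)} simplifies in case (a) to $(q/\phi(q))\,e^{\gamma(1-1/\phi(q))}\ge e^{\gamma/2}>1$, which absorbs the negligible $|T|$; in case (b), carrying out the integral evaluation of the prime sum on $p\equiv a\bmod q$ brings in an extra factor of order $1/\phi(q)$, and the resulting constant comfortably exceeds $2/(5(1+\phi(q)))$, yielding \eqref{(3.8)}. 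The main obstacle will be the uniform control in $q$ up to $(\log\log H)/\log\log\log H$ together with the fringe regime where $H/p$ is of order $\log H$, in which Lemma \ref{Lemma 3.3}'s asymptotic barely applies and must be supplemented by a trivial smooth-number estimate.
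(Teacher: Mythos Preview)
Your proposal follows essentially the same strategy as the paper's proof: in both cases one lower-bounds $|S|$ by counting integers $h=pm$ (or $h=m$ in case (a)) where $m$ is composed of primes $\equiv 1\bmod q$ above $\log H$, applies Lemma~\ref{Lemma 3.3} to the $m$-count, applies Siegel--Walfisz to the $p$-sum in case (b), computes $\phi(Q)/Q$ via Mertens and Lemma~\ref{Lemma 3.2}, and shows $|T|=o(|S|)$. Your structural observation that each $h\le H$ carries at most one prime from the large pools is exactly what underlies the paper's argument.

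A few minor points where your write-up diverges. First, you claim exact equalities for $|S|$ and $|T|$; the paper works only with inequalities, and strictly speaking your equalities fail because of the exceptional prime $p_0$ possibly removed from $Q$ (equation~\eqref{(3.5)}), which can introduce extra $h$'s divisible by $p_0$. This is harmless for the lower bound on $|S|$ and contributes $O(H/\log H)$ to $|T|$, but should be mentioned. Second, your factor $q/\phi(q)$ in the final ratio for case (a) is spurious: since every prime dividing $q$ already lies in $\mathscr{P}(H)$, one has $\phi(\tilde Q)/\tilde Q=\prod_{p\in\mathscr{P}(H)}(1-1/p)$ with no extra Euler factor, and the correct ratio is $e^{\gamma(1-1/\phi(q))}$ (still $>1$, so the conclusion survives). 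Third, for the fringe regime in case (b) where $H/p$ is too small for Lemma~\ref{Lemma 3.3}, the paper simply discards those $p$ by restricting to $p\le H/\sqrt{t(H)}$ (equivalently $l\le\tfrac12\log t(H)$ in its $e$-adic partition), which costs only a constant factor; there is no need for a separate smooth-number estimate. The paper also cites $|T|\ll H/\log H$ directly from \cite{F2010} rather than re-deriving it.
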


\begin{proof}[Proof of Theorem 1.1]
Fix an integer $q \ge 3$, arbitrary but bounded, and an integer $a$ that is coprime with $q$. Let $Q = Q(H;q,a)$, $S = S(H;q,a)$, and $T = T(H;q,a)$ be as defined in \eqref{(3.1)} --- \eqref{(3.6)}. In \cite[\S 6.2]{F2010}, we showed that there is a positive constant $A$, depending only on $q$, such that $\ab{S} - \ab{T} \gg_q H\phi(Q)/Q$ for some $H \in [Y/(\log Y)^{A},Y]$ and all sufficiently large $Y$. Using this, \emph{inter alia}, we established the lower bound \eqref{(1.1)} in \cite[\S 7]{F2010}. We also showed that if $\ab{S} - \ab{T} \gg_q H\phi(Q)/Q$ for all sufficiently large $H$, then \eqref{(1.2)} holds. Thus, Theorem \ref{Theorem 1.1} follows from Lemma \ref{Lemma 3.4}, in the way described in \cite[\S 7]{F2010}. (Here, the constant implied by $\gg_q$ depends only on $q$.)
\end{proof}

\begin{proof}[Proof of Lemma 3.4]
 Let $H$ be a sufficiently large number, and let a coprime pair of integers $q \ge 3$ and $a$ be given. Let $\mathscr{P}(H)$, $t(H)$, $\tilde{Q} = \tilde{Q}(H;q,a)$, $Q = Q(H;q,a)$, $p_0$, $S = S(H;q,a)$, and $T = T(H;q,a)$ be as defined in \eqref{(3.1)} --- \eqref{(3.6)}. We have
\begin{align}\label{(3.9)}
 \ab{T} \ll \frac{H}{\log H}.
\end{align}
This was shown in \cite[\S 6.2]{F2010}, where $q \ge 3$ was arbitrary but bounded. However, the larger $q$ is, the more primes there are that divide $Q$, hence the smaller the size of $T$. In \cite[\S 6.2]{F2010}, we actually bounded the size of $T$ by counting: the primes up to $H$; the integers of the form $pp'$, where $p \in (H/(\log H)^2, H]$ and $p' \in (\log H, (\log H)^2]$; the integers up to $H$ composed only of primes $p \le t(H)$ (using a result of de Bruijn on smooth numbers); and, in the case $p_0 \ne 1$, so that $p_0 > \log H$ by \eqref{(3.5)}, the multiples of $p_0$ up to $H$. Thus, \eqref{(3.9)} indeed holds uniformly for $q \ge 3$. 

Note that by definition of $\tilde{Q}$ and $Q$ (\eqref{(3.4)}, \eqref{(3.5)}), 
\begin{align}\label{(3.10)}
\ab{S} \ge \sumss{1 \le h \le H}{h \equiv 1 \bmod q}{(\tilde{Q},h) = 1} 1,
\end{align}
and
\begin{align}\label{(3.11)}
\phi(\tilde{Q})/\tilde{Q} \ge \textstyle\br{1 - \frac{1}{\log H}}\phi(Q)/Q.
\end{align}
We will work mainly with $\tilde{Q}$.

Now we suppose $q$ satisfies $3 \le q \le (\log H)^{\alpha}$, $\alpha \in (0,\frac{1}{2})$ given, and that $a \equiv 1 \bmod q$. Note that
\begin{align*}
 \log \br{\frac{H}{(\log H)^2}} = (\log H)\br{1 + \BigO{\frac{\log\log H}{\log H}}}.
\end{align*}
Thus, by definition of $\tilde{Q}$ (\eqref{(3.1)}, \eqref{(3.4)}), and two applications of Lemma \ref{Lemma 3.2}, 
\begin{align}\label{(3.12)}
\begin{split}
\phi(\tilde{Q})/\tilde{Q} & = 
\prod_{p \le H/(\log H)^2} \br{1 - \frac{1}{p}}
\prods{p \le H/(\log H)^2}{p \equiv 1 \bmod q}\br{1 - \frac{1}{p}}^{-1}
\prods{p \le \log H}{p \equiv 1 \bmod q}\br{1 - \frac{1}{p}} \\ & = 
\br{1 + \BigO{\frac{\log\log H}{\log H}}}
e^{-\gamma(1 - 1/\phi(q))}c(q) \frac{(\log H)^{\frac{1}{\phi(q)}}}{\log H}
\prods{p \le \log H}{p \equiv 1 \bmod q}\br{1 - \frac{1}{p}},
\end{split}
\end{align}
and so, by Lemma \ref{Lemma 3.3},
\begin{align}\label{(3.13)}
\begin{split}
\sumss{1 \le h \le H}{h \equiv 1 \bmod q}{(\tilde{Q},h) = 1} 1 & \ge
\sumss{1 \le h \le H}{p \mid h \Rightarrow p \equiv 1 \bmod q}{\text{and $p > \log H$}}\hspace{-5pt} 1 \\ & =
\br{1 + \BigO{  \frac{(\log\log H)^{c}}{ {\phantom{^{1-}}}  (\log H)^{1 - 2\alpha}}}  } 
\frac{c(q)}{\Gamma(1/\phi(q))}\cdot
\frac{H(\log H)^{\frac{1}{\phi(q)}}}{\log H} 
\hspace{-5pt} \prods{p \le \log H}{p \equiv 1 \bmod q}\hspace{-5pt}\br{1 - \frac{1}{p}} \\ & = 
\br{1 + \BigO{  \frac{(\log\log H)^{c}}{ {\phantom{^{1-}}}  (\log H)^{1 - 2\alpha}}}  } 
\frac{e^{\gamma(1 - 1/\phi(q))}}{\Gamma(1/\phi(q))}H \phi(\tilde{Q})/\tilde{Q}.
\end{split}
\end{align}
The left-hand side here is a lower bound for $\ab{S}$ \eqref{(3.10)}, so using the second line of \eqref{(3.13)} with the bound $\ab{T} \ll H/\log H$ \eqref{(3.9)}, we obtain
\begin{align*}
\frac{\ab{T}}{\ab{S}} \ll \frac{\Gamma(1/\phi(q))}{c(q)(\log H)^{\frac{1}{\phi(q)}}} 
\prods{p \le \log H}{p \equiv 1 \bmod q}\br{1 - \frac{1}{p}}^{-1}.
\end{align*}

At this point we suppose that $q$ is in the rather smaller range 
\begin{align*}
3 \le q \le \frac{\log\log H}{\log\log\log H}.
\end{align*}
Then we may apply Lemma \ref{Lemma 3.2} to this last product to obtain
\begin{align*}
\frac{\ab{T}}{\ab{S}} \ll \Gamma(1/\phi(q)) \br{\frac{\log\log H}{\log H}}^{\frac{1}{\phi(q)}} \ll 
\phi(q)\br{\frac{\log\log H}{\log H}}^{\frac{1}{\phi(q)}},
\end{align*}
and so
\begin{align*}
\log (\ab{T}/\ab{S}) & \le \BigO{1} + \log \phi(q) + \frac{1}{\phi(q)}\br{\log\log\log H - \log\log H} \\ & \le 
\BigO{1} + \log q + \frac{1}{q}\br{\log\log\log H - \log\log H} \\ & \le
\BigO{1} - \log \log \log \log H.
\end{align*}
Hence $\ab{T}/\ab{S} \ll 1/\log\log\log H$, and combining this with \eqref{(3.10)}, \eqref{(3.11)}, and \eqref{(3.13)}, we obtain
\begin{align*}
\ab{S} - \ab{T} \ge 
\br{1 + \BigO{\frac{1}{\log\log\log H}}} \frac{e^{\gamma(1 - 1/\phi(q))}}{\Gamma(1/\phi(q))} H \br{\frac{\phi(Q)}{Q}}.
\end{align*}
Noting that $e^{\gamma(1 - 1/\phi(q))} \ge e^{\gamma/2} > 1$ we obtain \eqref{(3.7)} (for all sufficiently large $H$), and the proof of part (a) is complete.

Now we suppose $a \not\equiv 1 \bmod q$. Once again we suppose that $3 \le q \le (\log H)^{\alpha}$, $\alpha \in (0,\frac{1}{2})$ given, until we want to show that $\ab{T}/\ab{S} = \littleo{1}$. Let us first of all show that
\begin{align}\label{(3.14)}
\begin{split}
\phi(\tilde{Q})/\tilde{Q} & = 
\br{1 + \BigO{\frac{\log\log\log H}{\log\log H}} } \\ & \hspace{70pt} \times
e^{-\gamma(1 - 1/\phi(q))} c(q)\cdot \frac{(\log t(H))^{\frac{1}{\phi(q)}}}{\log H} 
\prods{p \le \log H}{p \equiv 1 \bmod q} \br{1 - \frac{1}{p}}.
\end{split}
\end{align}
For by definition of $\tilde{Q}$ (\eqref{(3.2)} --- \eqref{(3.4)}),
\begin{align}\label{(3.15)}
\phi(\tilde{Q})/\tilde{Q} = 
\prod_{p \le H/(\log H)^2} \br{1 - \frac{1}{p}}
\prods{\log H < p \le t(H)}{p \equiv 1 \bmod q} \br{1 - \frac{1}{p}}^{-1}
\prods{H/t(H) < p \le H/(\log H)^2}{p \equiv a \bmod q} \br{1 - \frac{1}{p}}^{-1}.
\end{align}
By Mertens' theorem (the case $q = 1$ in Lemma \ref{Lemma 3.2}), 
\begin{align}\label{(3.16)}
\prod_{p \le H/(\log H)^2} \br{1 - \frac{1}{p}} & = 
\br{1 + \BigO{\frac{\log\log H}{\log H}} } \frac{ {\phantom{^-}} e^{-\gamma}}{\log H}.
\end{align}
Since $\log t(H) = (\log H)(\log\log\log H)/(2\log\log H)$ by definition \eqref{(3.2)} of $t(H)$, we certainly have $3 \le q \le (\log H)^{\frac{1}{2}} \le \log t(H)$ for all sufficiently large $H$, so applying Lemma \ref{Lemma 3.2} with $A = 1$, we obtain   
\begin{align}\label{(3.17)}
\begin{split}
\prods{\log H < p \le t(H)}{p \equiv 1 \bmod q} \br{1 - \frac{1}{p}}^{-1} & = 
\br{1 + \BigO{\frac{\log\log H}{(\log H)(\log\log\log H)} } } \\ & \hspace{50pt} \times
e^{\gamma/\phi(q)} c(q) (\log t(H))^{\frac{1}{\phi(q)}}   
\prods{p \le \log H}{p \equiv 1 \bmod q} \br{1 - \frac{1}{p}}.
\end{split}
\end{align}
As for the third product on the right-hand side of \eqref{(3.15)}, we have
\begin{align*}
1 & \le \prods{H/t(H) < p \le H/(\log H)^2}{p \equiv a \bmod q} \br{1 - \frac{1}{p}}^{-1} \le
\prod_{H/t(H) < p \le H/(\log H)^2} \br{1 - \frac{1}{p}}^{-1},
\end{align*}
and so two further applications of Mertens' theorem, plus a short calculation using the fact that $\log t(H) = (\log H)(\log\log\log H)/(2\log\log H)$, reveal that
\begin{align}\label{(3.18)}
1 & \le \prods{H/t(H) < p \le H/(\log H)^2}{p \equiv a \bmod q} \br{1 - \frac{1}{p}}^{-1} \le
1 + \BigO{\frac{\log\log\log H}{\log\log H}}.
\end{align}
Combining \eqref{(3.18)}, \eqref{(3.17)}, \eqref{(3.16)}, and \eqref{(3.15)} gives \eqref{(3.14)}.

Next, we will show that
\begin{align}\label{(3.19)}
\begin{split}
\sumss{1 \le h \le H}{h \equiv a \bmod q}{(\tilde{Q},h) = 1} 1 & \ge
\br{1 + \BigO{\frac{\log\log\log H}{\log\log H}}} \\ & \hspace{50pt} \times
\frac{\frac{1}{2}\br{1 - \frac{1}{e}}}{1 + \phi(q)} \cdot \frac{c(q)}{\Gamma(1/\phi(q))} \cdot \frac{H(\log t(H))^{\frac{1}{\phi(q)}}}{\log H} 
\prods{p \le \log H}{p \equiv 1 \bmod q} \br{1 - \frac{1}{p}}.
\end{split}
\end{align}
To this end we note, from the definition of $\tilde{Q}$ (\eqref{(3.3)}, \eqref{(3.4)}), that if $h = pm$, where $p > H/t(H)$ is a prime congruent to $a$ mod $q$, and $m \le H/p < t(H)$ is composed only of primes that are greater than $\log H$ and congruent to $1$ mod $q$, then $h \equiv a \bmod q$ and $(\tilde{Q},h) = 1$. We partition $(H/t(H),H]$ into sub-intervals
\begin{align*}
I_{l} \defeq (e^{l-1}H/t(H), e^{l}H/t(H)], \quad 1 \le l \le \log t(H),
\end{align*}
and deduce that
\begin{align}\label{(3.20)}
\sumss{1 \le h \le H}{h \equiv a \bmod q}{(\tilde{Q},h) = 1} 1 \ge
\sum_{1 \le l \le \log t(H)} \hspace{5pt}
\sums{p \in I_l}{p \equiv a \bmod q} \hspace{5pt}
\sumss{m \le t(H)/e^{l}}{p \mid m \Rightarrow p \equiv 1 \bmod q}{\text{and $p > \log H$}} 1.
\end{align}

Now, for $0 \le l \le \log t(H)$, we have
\begin{align*}
\log \br{\frac{e^{l}H}{t(H)}} =(\log H)\br{1 + \BigO{\frac{\log t(H)}{\log H}}} = (\log H)\br{1 + \BigO{\frac{\log\log\log H}{\log\log H}}},
\end{align*}
because $\log t(H) = (\log H)(\log\log\log H)/(2\log\log H)$ by definition \eqref{(3.2)} of $t(H)$. In particular, since $q \le (\log H)^{\alpha}$, $\alpha < \frac{1}{2}$, we certainly have $q \le \log (e^{l}H/t(H))$ for all sufficiently large $H$. Therefore we may apply Theorem \ref{Theorem 3.1} (Siegel-Walfisz), with $A = 1$, to obtain, for $1 \le l \le \log t(H)$, 
\begin{align}\label{(3.21)}
\begin{split}
\sums{p \in I_l}{p \equiv a \bmod q} 1 & = 
\sums{p \le e^{l}H/t(H)}{p \equiv a \bmod q} 1 - \sums{p \le e^{l-1}H/t(H)}{p \equiv a \bmod q} 1 \\ & = 
\br{1 + \BigO{\frac{\log\log\log H}{\log\log H}}}
\frac{1}{\phi(q)}\cdot \frac{H}{t(H)\log H}\br{1 - \frac{1}{e}}e^{l}.
\end{split}
\end{align}
Also, since $\log t(H) = (\log H)(\log\log\log H)/(2\log\log H)$ by definition \eqref{(3.2)} of $t(H)$, we have, for $1 \le l \le \frac{1}{2}\log t(H)$, that $\log H = \br{\log (t(H)/e^{l})}^{1 + \littleo{1}}$, where $\littleo{1}$ is shorthand for $\BigO{\log\log\log H/\log\log H}$. Thus, for $1 \le l \le \frac{1}{2}\log t(H)$ and all sufficiently large $H$, we have
\begin{align*}
3 \le q \le (\log H)^{\alpha} \le  \br{\log \br{t(H)/e^{l}}}^{\beta}, 
\quad \beta \defeq \textstyle\frac{1}{2}(\alpha + \frac{1}{2}) \in (0,\frac{1}{2}).
\end{align*}
Therefore, for $1 \le l \le \frac{1}{2}\log t(H)$, we may apply Lemma \ref{Lemma 3.3}, with $\beta$ in place of $\alpha$, and $A = 2$, say, to obtain
\begin{align}\label{(3.22)}
\begin{split}
\sumss{m \le t(H)/e^{l}}{p \mid m \Rightarrow p \equiv 1 \bmod q}{\text{and $p > \log H$}} 1 & = 
\br{1 + \BigO{\frac{\br{\log\log t(H)}^c  }{\br{\log t(H)}^{1 - 2\beta} }} } \frac{c(q)}{\Gamma(1/\phi(q))} \\ & 
\hspace{30pt} \times \frac{t(H)}{e^{l}} \cdot
\frac{\br{\log\br{t(H)/e^{l}}}^{\frac{1}{\phi(q)}}}{\log\br{t(H)/e^{l}}} 
\prods{p \le \log H}{p \equiv 1 \bmod q} \br{1 - \frac{1}{p}} \\ & \ge
\br{1 + \BigO{\frac{\br{\log\log t(H)}^c  }{\br{\log t(H)}^{1 - 2\beta} }} } 
\frac{c(q)}{\Gamma(1/\phi(q))} \\ & \hspace{30pt} \times 
\frac{t(H)}{e^{l}} \cdot
\frac{\br{\log t(H)}^{\frac{1}{\phi(q)}}}{\log t(H)}
\br{1 - \frac{l}{\log t(H)}}^{\frac{1}{\phi(q)}}
\prods{p \le \log H}{p \equiv 1 \bmod q} \br{1 - \frac{1}{p}}.
\end{split}
\end{align} 

Note that, since $\log t(H) = (\log H)(\log\log\log H)/(2\log\log H)$ by definition \eqref{(3.2)}, 
\begin{align*}
\frac{\br{\log\log t(H)}^c  }{\br{\log t(H)}^{1 - 2\beta} } \ll \frac{\log\log\log H}{\log\log H}
\end{align*}
for all sufficiently large $H$. Thus, combining \eqref{(3.22)} and \eqref{(3.21)} with \eqref{(3.20)}, we obtain
\begin{align}\label{(3.23)}
\begin{split}
\sumss{1 \le h \le H}{h \equiv a \bmod q}{(\tilde{Q},h) = 1} 1 & \ge
\sum_{1 \le l \le \frac{1}{2}\log t(H)} \hspace{5pt}
\sums{p \in I_l}{p \equiv a \bmod q} \hspace{5pt}
\sumss{m \le t(H)/e^{l}}{p \mid m \Rightarrow p \equiv 1 \bmod q}{\text{and $p > \log H$}} 1 \\ & \ge
\br{1 + \BigO{\frac{\log\log\log H}{\log\log H}}}
\frac{c(q)}{\Gamma(1/\phi(q))} \cdot
\frac{H(\log t(H))^{\frac{1}{\phi(q)}}}{\log H} 
\prods{p \le \log H}{p \equiv 1 \bmod q} \br{1 - \frac{1}{p}} \\ & \hspace{30pt} \times
\br{1 - \frac{1}{e}} \frac{1}{\phi(q)} \cdot \frac{1}{\log t(H)} 
\sum_{1 \le l \le \frac{1}{2}\log t(H)} \br{1 - \frac{l}{\log t(H)}}^{\frac{1}{\phi(q)}}.
\end{split}
\end{align}
Finally,
\begin{align*}
\sum_{1 \le l \le \frac{1}{2}\log t(H)} \br{1 - \frac{l}{\log t(H)}}^{\frac{1}{\phi(q)}} & \ge
\int_{1}^{\frac{1}{2}\log t(H)} \br{1 - \frac{u}{\log t(H)}}^{\frac{1}{\phi(q)}} \, \dd{u} \\ & = 
\frac{\log t(H)}{1 + \frac{1}{\phi(q)}}
\br{ \br{1 - \frac{1}{\log t(H)}}^{1 + \frac{1}{\phi(q)}} - \br{\frac{1}{2}}^{1 + \frac{1}{\phi(q)}}   } \\ & \ge
\frac{\log t(H)}{1 + \frac{1}{\phi(q)}}
\br{ \br{1 - \frac{1}{\log t(H)}}^{2} - \frac{1}{2}}    \\ & =
\frac{\log t(H)}{1 + \frac{1}{\phi(q)}} \cdot \frac{1}{2} \br{1 + \BigO{\frac{1}{\log t(H)}}},
\end{align*}
and combining this with \eqref{(3.23)} gives \eqref{(3.19)}.

Comparing \eqref{(3.14)} with \eqref{(3.19)}, then using \eqref{(3.10)} and \eqref{(3.11)}, we see that
\begin{align}\label{(3.24)}
\begin{split}
\ab{S} & \ge \sumss{1 \le h \le H}{h \equiv a \bmod q}{(\tilde{Q},h) = 1} 1 \\ & \ge
\br{1 + \BigO{\frac{\log\log\log H}{\log\log H}}}  
\frac{\frac{1}{2}\br{1 - \frac{1}{e}}}{1 + \phi(q)} \cdot
\frac{e^{\gamma(1 - 1/\phi(q))}}{\Gamma(1/\phi(q))} H\phi(\tilde{Q})/\tilde{Q} \\ & \ge
\br{1 + \BigO{\frac{\log\log\log H}{\log\log H}}}  
\frac{\frac{1}{2}\br{1 - \frac{1}{e}}}{1 + \phi(q)} \cdot
\frac{e^{\gamma(1 - 1/\phi(q))}}{\Gamma(1/\phi(q))} H\phi(Q)/Q.
\end{split}
\end{align}
Also, using the bound $\ab{T} \ll H/\log H$ \eqref{(3.9)}, and combining \eqref{(3.10)} with \eqref{(3.19)}, we obtain
\begin{align*}
\frac{\ab{T}}{\ab{S}} \ll \frac{\phi(q)\Gamma(1/\phi(q))}{c(q)(\log t(H))^{\frac{1}{\phi(q)}}} 
\prods{p \le \log H}{p \equiv 1 \bmod q}\br{1 - \frac{1}{p}}^{-1}.
\end{align*}

At this point we suppose that $q$ is in the rather smaller range 
\begin{align*}
3 \le q \le \frac{1}{2}\cdot \frac{\log\log H}{\log\log\log H}.
\end{align*}
Then we may apply Lemma \ref{Lemma 3.2} to this last product to obtain
\begin{align*}
\frac{\ab{T}}{\ab{S}} \ll \phi(q)\Gamma(1/\phi(q)) \br{\frac{\log\log H}{\log t(H)}}^{\frac{1}{\phi(q)}} \ll 
(\phi(q))^2\br{\frac{\log\log H}{\log t(H)}}^{\frac{1}{\phi(q)}},
\end{align*}
and so, since $\log t(H) = (\log H)(\log \log \log H)/(2\log\log H)$ by \eqref{(3.2)}, 
\begin{align*}
\log (\ab{T}/\ab{S}) & \le \BigO{1} + 2\log \phi(q) + \frac{1}{\phi(q)}\br{\log\log\log H - \log\log t(H)} \\ & \le
\BigO{1} + 2\log q \\ & \hspace{30pt} + \frac{1}{q}\br{2\log\log\log H -\log\log H -\log\log\log\log H  + \BigO{1}} \\ & \le
\BigO{1} - 2\log \log \log \log H.
\end{align*}
Hence $\ab{T}/\ab{S} \ll 1/(\log\log\log H)^2$, and, combining this with the last inequality in \eqref{(3.24)}, we obtain
\begin{align*}
\ab{S} - \ab{T} \ge \br{1 + \BigO{\frac{1}{(\log\log\log H)^2}}}
\frac{\frac{1}{2}\br{1 - \frac{1}{e}}}{1 + \phi(q)} \cdot
\frac{e^{\gamma(1 - 1/\phi(q))}}{\Gamma(1/\phi(q))} H\phi(Q)/Q.
\end{align*}
Noting that $\frac{1}{2}\br{1 - \frac{1}{e}}e^{\gamma(1 - 1/\phi(q))} \ge \frac{1}{2}\br{1 - \frac{1}{e}}e^{\gamma/2} = 0.42\ldots > \frac{2}{5}$, we obtain \eqref{(3.8)} (for all sufficiently large $H$), and the proof of part (b) is complete.
\end{proof}

\section{Proof of Lemmas 3.2 and 3.3} \label{Section 3}

Throughout this section, at each and every occurrence of $\Oh$, $\ll$, and $\gg$, the implied constant is absolute. When we write $A \asymp B$, we mean $A \ll B$ and $B \ll A$ both hold, the implied constants being absolute. The letter $c$, by itself, always denotes an absolute positive constant, possibly a different constant at each occurrence.  Also, $s = \sigma + i\tau$ denotes a complex variable, $\sigma$ and $\tau$ being real. We often use $s$ and $\sigma + i\tau$ interchangeably: for example if we write $\ab{X^s} = X^{\sigma}$, or $\zeta(s) \ne 0$ for $\ab{\tau} \ge 2$, $\sigma \ge 1 - 1/\log\ab{\tau}$, it is to be understood that $s = \sigma + i\tau$. We always assume $q$ is an integer and that $q \ge 3$. A Dirichlet character $\chi$ is always to be taken as a character to the modulus $q$, with corresponding Dirichlet $L$-function $L(s,\chi)$. Finally, $\chi_0$ always denotes the principal character to the modulus $q$.

The following proof of Lemma \ref{Lemma 3.2} is, \emph{mutatis mutandis}, a proof due to Hardy \cite{H1927} of Mertens' theorem (the case $q = 1$), in which the following propositions are used:
\begin{Prop}\label{Proposition 4.1}
If $a > 0$, $\delta > 0$, and $\delta \to 0$, then
\begin{align*}
\int_a^{\infty} \frac{e^{-\delta t}}{t} \, \dd{t} - \log \frac{1}{\delta} \to -\log a - \gamma
\end{align*}
when $\delta \to 0$, where $\gamma = 0.57721\ldots$ is the Euler-Mascheroni constant.
\end{Prop}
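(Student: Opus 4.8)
The plan is to strip away the parameters $a$ and $\delta$ by a scaling substitution, reduce to a single divergent integral whose logarithmic singularity is made explicit, pass to the limit, and then recognize the surviving constant as $-\gamma$.

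First I would substitute $u = \delta t$, which turns $\int_a^{\infty} e^{-\delta t}/t\,\dd{t}$ into $\int_{\delta a}^{\infty} e^{-u}/u\,\dd{u}$, so that the quantity to be understood is $\int_{\delta a}^{\infty}(e^{-u}/u)\,\dd{u} - \log(1/\delta)$. For $\delta$ small enough that $\delta a < 1$, I would split the range at $1$ and, on $(\delta a,1]$, write $e^{-u}/u = (e^{-u}-1)/u + 1/u$; the contribution $\int_{\delta a}^{1}\dd{u}/u = \log(1/\delta) - \log a$ cancels the $\log(1/\delta)$ and leaves the $-\log a$, giving
\[
\int_{\delta a}^{\infty} \frac{e^{-u}}{u}\,\dd{u} - \log\frac{1}{\delta} = -\log a + \int_{\delta a}^{1}\frac{e^{-u}-1}{u}\,\dd{u} + \int_{1}^{\infty}\frac{e^{-u}}{u}\,\dd{u}.
\]
Since $(e^{-u}-1)/u$ is bounded near $u=0$ (it tends to $-1$), the integral $\int_{\delta a}^{1}(e^{-u}-1)/u\,\dd{u}$ converges as $\delta\to 0$ to $\int_{0}^{1}(e^{-u}-1)/u\,\dd{u}$, and therefore the left-hand side tends to $-\log a + \int_{0}^{1}(e^{-u}-1)/u\,\dd{u} + \int_{1}^{\infty}e^{-u}/u\,\dd{u}$.

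It then remains to verify the classical identity $\int_{0}^{1}\frac{e^{-u}-1}{u}\,\dd{u} + \int_{1}^{\infty}\frac{e^{-u}}{u}\,\dd{u} = -\gamma$. I would deduce this from $\int_{0}^{\infty}e^{-u}\log u\,\dd{u} = \Gamma'(1) = -\gamma$ by integrating by parts separately on $(0,1]$ (using the antiderivative $1-e^{-u}$ of $e^{-u}$, whose boundary term vanishes because $(1-e^{-u})\log u \to 0$ as $u\to 0$) and on $[1,\infty)$ (using the antiderivative $-e^{-u}$, whose boundary terms vanish at $1$ and at $\infty$); this reproduces precisely the two integrals above. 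Alternatively, avoiding the $\Gamma$-function entirely, one may start from $\int_{0}^{1}\frac{1-(1-t)^N}{t}\,\dd{t} = \sum_{n=1}^{N}\frac{1}{n}$ (expand the integrand as a geometric sum and integrate termwise), substitute $t = u/N$, split at $u=1$, and let $N\to\infty$ by dominated convergence — using $1-(1-u/N)^N \le u$ on $(0,1]$ and $(1-u/N)^N \le e^{-u}$ on $[1,N]$ — to obtain $\gamma = \int_{0}^{1}\frac{1-e^{-u}}{u}\,\dd{u} - \int_{1}^{\infty}\frac{e^{-u}}{u}\,\dd{u}$.

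The only genuine obstacle is this last step, the identification of the constant with Euler's $\gamma$; which route one prefers depends on the definition of $\gamma$ one starts from, but either of the two indicated computations settles it. Everything else — the substitution, the isolation of the logarithmic singularity, and the justification of the limit via the local boundedness of $(e^{-u}-1)/u$ at $0$ and the rapid decay of $e^{-u}$ at infinity — is routine bookkeeping.
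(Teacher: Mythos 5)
The paper does not actually prove Proposition \ref{Proposition 4.1}: it simply cites Proposition (A) of Hardy's note \cite{H1927}, which in turn does not prove it either, merely remarking that it is ``familiar in the theory of the Gamma function.'' Your proof therefore fills in a step that the paper (and its source) leave to the reader, and it does so correctly. The substitution $u = \delta t$, the splitting of the range at $u = 1$, and the decomposition $e^{-u}/u = (e^{-u}-1)/u + 1/u$ on $(\delta a, 1]$ cleanly isolate the logarithmic singularity, the passage to the limit is justified by the boundedness of $(e^{-u}-1)/u$ near $0$ and the decay at infinity, and the reduction is to the identity
\begin{align*}
\int_{0}^{1}\frac{e^{-u}-1}{u}\,\dd{u} + \int_{1}^{\infty}\frac{e^{-u}}{u}\,\dd{u} = -\gamma,
\end{align*}
which is exactly the ``familiar'' fact Hardy has in mind. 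Both of your verifications of this identity work: the integration by parts against $\Gamma'(1) = -\gamma$ (with the antiderivative $1 - e^{-u}$ on $(0,1]$ to kill the boundary term at $0$, and $-e^{-u}$ on $[1,\infty)$) is precisely the ``theory of the Gamma function'' route Hardy alludes to, while the second route via $\int_{0}^{1}(1-(1-t)^N)t^{-1}\,\dd{t} = \sum_{n\le N} n^{-1}$ and dominated convergence is more elementary and ties the constant directly to the defining limit of $\gamma$. In short: where the paper defers to a reference, you have supplied a complete argument, and it is sound.
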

\begin{proof}
This is Proposition (A) in \cite{H1927}, where it is not proved, but is said to be ``familiar in the theory of the Gamma function''.
\end{proof}

\begin{Prop}\label{Proposition 4.2}
If 
\begin{align*}
 J(\delta) = \int_a^{\infty} f(t) t^{-\delta} \, \dd{t}
\end{align*}
where $a > 0$, is convergent for $\delta > 0$ and tends to a limit $l$ when $\delta \to 0$, and 
\begin{align*}
f(t) = \BigO{\frac{1}{t\log t}},
\end{align*}
then $J(0)$ is convergent and has the value $l$. 
\end{Prop}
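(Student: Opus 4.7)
My plan is to establish Proposition \ref{Proposition 4.2} as a Tauberian theorem. Set $F(X) := \int_a^X f(t)\, dt$; the conclusion to be proved is that $F(X) \to l$ as $X \to \infty$.

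The hypothesis $f(t) = O(1/(t\log t))$ yields the crude growth bound $F(X) = O(\log\log X)$, which guarantees $F(t)\, t^{-\delta} \to 0$ as $t \to \infty$ for every $\delta > 0$. Integration by parts in the definition of $J(\delta)$ then produces the identity
\[
J(\delta) = \delta \int_a^\infty F(t)\, t^{-\delta - 1}\, dt \qquad (\delta > 0),
\]
and after the substitution $u = \delta\log t$ this becomes
\[
J(\delta) = \int_{\delta\log a}^\infty F(e^{u/\delta})\, e^{-u}\, du,
\]
exhibiting $J(\delta)$ as an averaging of $F$ against the probability measure $e^{-u}\, du$ on $[0, \infty)$, pulled back under the exponential along a scale that stretches out as $\delta \to 0^+$.

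The same hypothesis implies the slow-oscillation estimate
\[
|F(\lambda X) - F(X)| \le C \int_X^{\lambda X}\frac{dt}{t\log t} = C\log\!\bigl(1 + \tfrac{\log\lambda}{\log X}\bigr),
\]
which tends to $0$ as $X \to \infty$ uniformly for $\lambda$ on compact subsets of $[1, \infty)$. Coupling $\delta = \delta_X := 1/\log X$ places $F(X)$ at the point $u = 1$ of the averaging integral, and the slow-oscillation bound then reads $|F(e^{u/\delta_X}) - F(X)| \le C\log u$ for $u \ge 1$, with an analogous estimate for $0 < u \le 1$. Using this estimate to approximate $F(e^{u/\delta_X})$ by $F(X)$ inside the integral, one wants to read off $F(X) \to l$ from $J(\delta_X) \to l$.

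The main obstacle, typical of Littlewood-type Tauberian theorems, is that the naive $O(\log u)$ error in approximating $F(e^{u/\delta_X})$ by $F(X)$ integrates against $e^{-u}$ to a constant, so the direct comparison only yields $F(X) = l + O(1)$. Extracting genuine convergence requires tracking exact main terms rather than absolute-value bounds; this is where Proposition \ref{Proposition 4.1} enters, supplying the asymptotic for $\int e^{-\delta t}/t\, dt$ needed to cancel the logarithmic contributions that arise when one estimates the pieces of $F(X) - J(\delta_X)$ using the explicit bound $|f(t)| \le C/(t\log t)$. Executing this cancellation carefully is the technical heart of the proof and mirrors Hardy's treatment in \cite{H1927}; once it is carried out, $F(X) \to l$ follows, which is the assertion that $J(0)$ converges and equals $l$.
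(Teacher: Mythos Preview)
The paper does not actually prove Proposition \ref{Proposition 4.2}; it simply records that this is a result of Landau \cite{L1913}, quoted by Hardy as Proposition (D) in \cite{H1927}. So there is nothing to compare at the level of argument --- you are attempting more than the paper does.

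That said, your sketch stops exactly where the real work begins. You correctly derive the averaging identity $J(\delta) = \int_{\delta\log a}^\infty F(e^{u/\delta})\, e^{-u}\, du$ and correctly diagnose that the slow-oscillation bound $|F(X^u) - F(X)| \le C|\log u|$ only gives $F(X) = l + O(1)$. But your proposed resolution is problematic on two counts. First, you invoke Proposition \ref{Proposition 4.1} as the key to the cancellation, yet that proposition is just the explicit asymptotic $\int_a^\infty e^{-\delta t}\, t^{-1}\, dt = \log(1/\delta) - \log a - \gamma + o(1)$; in \cite{H1927} it is used \emph{alongside} Proposition \ref{Proposition 4.2} in the proof of Mertens' theorem, not in any proof \emph{of} Proposition \ref{Proposition 4.2}, and you give no indication of how it would effect the cancellation you need. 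Second, you say the missing step ``mirrors Hardy's treatment in \cite{H1927}'', but Hardy does not prove his Proposition (D) there --- he attributes it to Landau and uses it as a black box. The reference you lean on for the crucial step does not contain that step.

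As written, then, the proposal is an outline that identifies the difficulty without resolving it. A complete argument requires Landau's original reasoning or some other genuine Tauberian device to pass from $J(\delta) \to l$ to convergence of $F(X)$; the slow-oscillation bound together with Proposition \ref{Proposition 4.1} does not close the gap.
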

\begin{proof}
This result of Landau \cite{L1913} is Proposition (D) in \cite{H1927}.
\end{proof}

In addition to Propositions \ref{Proposition 4.1} and \ref{Proposition 4.2}, we will use some basic properties of Dirichlet $L$-series. In particular, for $\chi \ne \chi_0$, $L(s,\chi)$ is analytic and converges for $\sigma > 0$; it is absolutely convergent for $\sigma > 1$. We have $L(1,\chi) > 0$ for $\chi \ne \chi_0$ (see Theorem \ref{Theorem 4.4} below). We have 
\begin{align}\label{(4.1)}
L(s,\chi_0) = \zeta(s) \prod_{p \mid q}\br{1 - \frac{1}{p^{s}}}, \quad \sigma > 1,
\end{align}
where $\zeta(s)$ is the Riemann zeta function, analytic throughout the complex plane except for a simple pole at $s = 1$, where the residue is $1$; in fact \cite[(2.1.16)]{T1986}
\begin{align}\label{(4.2)}
 \zeta{(s)} = \frac{1}{s-1} + \gamma + \BigO{\ab{s-1}}, \quad s \to 1.
\end{align}

Because of the orthogonality relation \cite[II \S 8.1, Theorem 2 (a)]{T1995}:
\begin{align}\label{(4.3)}
 \sum_{\chi \bmod q} \chi(n) = 
\begin{cases}
 \phi(q) & \text{if $n \equiv 1 \bmod q$,} \\
 0 & \text{otherwise,}
\end{cases}
\end{align}
we have, for $\sigma > 1$,
\begin{align}\label{(4.4)}
\begin{split}
\log \textstyle \prod_{\chi \bmod q} L(s,\chi) & = 
\sum_{\chi \bmod q} \log \textstyle \prod_p \br{1 - \frac{\chi(p)}{p^{s}}}^{-1} \\ & = 
\sum_{\chi \bmod q} \sum_{p} \sum_{m=1}^{\infty} \frac{\chi(p^m)}{mp^{ms}} \\ & = 
\sum_{p} \sum_{m=1}^{\infty} \frac{1}{mp^{ms}} \sum_{\chi \bmod q} \chi(p^m) \\ & = 
\phi(q)\sum_{p} \sums{m=1}{p^m \equiv 1 \bmod q}^{\infty} \frac{1}{mp^{ms}}. 
\end{split}
\end{align}
Exponentiating, we deduce that for $\sigma > 1$, $\prod_{\chi \bmod q} L(\sigma,\chi)$ is a real number greater than or equal to $1$. Also, we see that for $\sigma > 1$,
\begin{align}\label{(4.5)}
\begin{split}
\sum_{p \equiv 1 \bmod q} \log \br{1 - \frac{1}{p^{s}}}^{-1} & = 
\sum_{p} \sum_{m=1}^{\infty} \frac{1}{mp^{ms}} \\ & = 
\log \Theta(s) + \frac{1}{\phi(q)}\log \textstyle \prod_{\chi \bmod q} L(s,\chi),
\end{split} 
\end{align}
where
\begin{align}\label{(4.6)}
\Theta(s) \defeq \exp\Bigg(-\sum_{p \not\equiv 1 \bmod q} \hspace{5pt} \sums{m=2}{p^m \equiv 1 \bmod q}^{\infty} \frac{1}{mp^{ms}}\Bigg)
\end{align}
is absolutely convergent and analytic for $\sigma > 1/2$, and $\Theta(1) \asymp 1$ (see the proof of Lemma \ref{Lemma 4.5} (a), below). 

We will also use Theorem \ref{Theorem 3.1} (Siegel-Walfisz), and therefore, implicitly, certain properties and results concerning $L$-functions that are used in its proof, in the proof of Lemma \ref{Lemma 3.2}.

\begin{proof}[Proof of Lemma 3.2]
Let $X \ge 3$ be a number and let $q \ge 3$ be an integer. We use the notation
\begin{align*}
\pi(t;q,1) \defeq \sums{p \le t}{p \equiv 1 \bmod q} 1.
\end{align*}
We begin by noting that for $\sigma \ge 1$, partial summation gives
\begin{align}\label{(4.7)}
 \sums{p \le X}{p \equiv 1 \bmod q} \log\br{1 - \frac{1}{p^{\sigma}}}^{-1} = 
\pi(X;q,1)\log\br{1 - \frac{1}{X^{\sigma}}}^{-1} + \sigma\int_{e}^{X} \frac{\pi(t;q,1)}{t(t^{\sigma} - 1)} \, \dd{t}.
\end{align}
Now $\pi(X;q,1)\log\br{1 - X^{-\sigma}}^{-1} \ll X^{1 - \sigma}$, and so, for $\sigma > 1$, letting $X \to \infty$ in \eqref{(4.7)} gives
\begin{align*}
 \sum_{p \equiv 1 \bmod q} \log\br{1 - \frac{1}{p^{\sigma}}}^{-1} & =
\sigma \int_{e}^{\infty} \frac{\pi(t;q,1)}{t(t^{\sigma} - 1)}\, \dd{t} \\ & = 
\sigma \int_{e}^{\infty} \br{\pi(t;q,1) - \frac{t}{\phi(q)\log t}}t^{-1 - \sigma}  \, \dd{t} \\ & \hspace{50pt} + 
\sigma \int_{e}^{\infty} \frac{\pi(t;q,1)}{t^{1 + \sigma}(t^{\sigma} - 1)} \, \dd{t} + 
 \int_{e}^{\infty} \frac{ {\phantom{^{-\sigma}}}  t^{-\sigma}}{\phi(q)\log t} \, \dd{t} \\ & \eqdef 
J_1(\sigma) + J_2(\sigma) + J_3(\sigma).
\end{align*}

On the other hand, with $\Theta(\sigma)$ as defined in \eqref{(4.6)} we have, by \eqref{(4.5)} and \eqref{(4.1)},
\begin{align*}
\sum_{p \equiv 1 \bmod q} \log\br{1 - \frac{1}{p^{\sigma}}}^{-1} & = \log \Theta(\sigma)  +
\frac{1}{\phi(q)}\log \zeta(\sigma) \\ & \hspace{30pt} +
 \frac{1}{\phi(q)}\textstyle\log \br{\prod_{p \mid q} \br{1 - \frac{1}{p^{\sigma}}}\prod_{\chi \ne \chi_0} L(\sigma,\chi)}.
\end{align*}
Comparing the last two expressions for $\sum_{p \equiv 1 \bmod q}\log(1 - p^{-\sigma})^{-1}$, we obtain, for $\sigma > 1$,
\begin{align}\label{(4.8)}
\begin{split}
  J_1(\sigma) & = - J_2(\sigma) - J_3(\sigma) +
\frac{1}{\phi(q)} \log \zeta(\sigma) \\ & \hspace{30pt} + 
\log\Theta(\sigma) +
\frac{1}{\phi(q)}\textstyle\log \br{\prod_{p \mid q} \br{1 - \frac{1}{p^{\sigma}}}\prod_{\chi \ne \chi_0} L(\sigma,\chi)}.
\end{split}
\end{align}

As $\sigma \to 1^{+}$, $J_2(\sigma) \to J_2(1)$ by uniform convergence;  
\begin{align*}
 J_3(\sigma) = \frac{\sigma}{\phi(q)}\int_{1}^{\infty} e^{-(\sigma - 1)u}\frac{\dd{u}}{u} = 
\frac{1}{\phi(q)}\br{\log \br{\frac{1}{\sigma - 1}} - \gamma + \littleo{1}},
\end{align*}
by Proposition \ref{Proposition 4.1}; and

\begin{align*}
 \frac{1}{\phi(q)}\log\zeta{(\sigma)} = \frac{1}{\phi(q)}\br{\log \br{\frac{1 + \littleo{1}}{\sigma - 1}}} = 
 \frac{1}{\phi(q)}\br{\log \br{\frac{1}{\sigma - 1}} + \littleo{1}},
\end{align*} 
by \eqref{(4.2)}. Combining all of this with \eqref{(4.8)}, we see that, as $\sigma \to 1^{+}$, 
\begin{align*}
 J_1(\sigma) \to l \defeq -J_2(1) + \frac{\gamma}{\phi(q)} + 
\log\Theta(1) +
\frac{1}{\phi(q)} \log \br{ (\phi(q)/q) \textstyle\prod_{\chi \ne \chi_0} L(1,\chi)}.
\end{align*}
Applying Proposition \ref{Proposition 4.2}, with $\delta = \sigma - 1$, and
\begin{align*}
f(t) = \br{\pi(t;q,1) - \frac{t}{\phi(q)\log t}}t^{-2} \ll \br{\sum_{p \le t} 1 + \frac{t}{\log t}}t^{-2} \ll \frac{1}{t\log t}
\end{align*}
(by the prime number theorem), we see that $J_1(1)$ is convergent and has value $l$. Thus
\begin{align}\label{(4.9)}
 J_1(1) + J_2(1) = \frac{\gamma}{\phi(q)} + 
\log\Theta(1) +
\frac{1}{\phi(q)} \log \br{ (\phi(q)/q) \textstyle\prod_{\chi \ne \chi_0} L(1,\chi)}.
\end{align}

Now, supposing $3 \le q \le (\log X)^A$ for some positive number $A$, Theorem \ref{Theorem 3.1} (Siegel-Walfisz) implies that
\begin{align*}
 \br{\pi(t;q,1) - \frac{t}{\phi(q)\log t}}t^{-2} \ll \frac{1}{t(\log t)^2}
\end{align*}
for all $t \ge X$, and so
\begin{align}\label{(4.10)}
\begin{split}
 \int_{e}^{X} \br{\pi(t;q,1) - \frac{t}{\phi(q)\log t}}t^{-2}  \, \dd{t} & = 
J_1(1) - \int_{X}^{\infty} \br{\pi(t;q,1) - \frac{t}{\phi(q)\log t}}t^{-2}  \, \dd{t} \\ & = 
J_1(1) + \BigO{\frac{1}{\log X}}.
\end{split}
\end{align}
Also, since, by the prime number theorem, $\pi(t;q,1)/(t^2(t-1)) \ll 1/(t^2\log t)$ for all $q \ge 3$ and $t \ge e$, we have
\begin{align}\label{(4.11)}
 \int_{e}^{X} \frac{\pi(t;q,1)}{t^{2}(t - 1)} \, \dd{t} = 
J_2(1) - \int_{X}^{\infty} \frac{\pi(t;q,1)}{t^{2}(t - 1)} \, \dd{t} = 
J_2(1) + \BigO{\frac{1}{X\log X}}.
\end{align}

We return at last to \eqref{(4.7)}, in which we now take $\sigma = 1$, $X$ a sufficiently large number, and $q$ an integer in the range $3 \le q \le (\log X)^A$. Using the fact that $\pi(X;q,1) \ll X/\log X$ by the prime number theorem, followed by \eqref{(4.10)}, and  \eqref{(4.11)}, we obtain

\begin{align*}
 \sums{p \le X}{p \equiv 1 \bmod q} \log\br{1 - \frac{1}{p}}^{-1} & =
\pi(X;q,1)\log\br{1 - \frac{1}{X}}^{-1} + \int_{e}^{X} \frac{\pi(t;q,1)}{t(t-1)} \, \dd{t} \\ & = 
\BigO{\frac{1}{\log X}} + 
\int_{e}^{X} \br{\pi(t;q,1) - \frac{t}{\phi(q)\log t}}t^{-2}  \, \dd{t} \\ & \hspace{50pt} + 
\int_{e}^{X} \frac{\pi(t;q,1)}{t^{2}(t - 1)} \, \dd{t} + 
\frac{1}{\phi(q)}\int_{e}^{X} \frac{ \dd{t}}{t\log t} \\ & = 
\BigO{\frac{1}{\log X}} + J_1(1) + J_2(1) + \frac{1}{\phi(q)}\log\log X.
\end{align*}
Using \eqref{(4.9)} and exponentiating, we obtain 
\begin{align*}
\prods{p \le X}{p \equiv 1 \bmod q} \br{1 - \frac{1}{p}}^{-1} = 
\br{1 + \BigO{\frac{1}{\log X}}} e^{\gamma/\phi(q)}c(q) (\log X)^{\frac{1}{\phi(q)}},
\end{align*}
with
\begin{align}\label{(4.12)}
c(q) \defeq \Theta(1) \br{\frac{\phi(q)}{q}\prod_{\chi \ne \chi_0} L(1,\chi)}^{1/\phi(q)}.
\end{align}
By the observations made before the proof, $\Theta(1)$ is positive, and the product $\prod_{\chi \ne \chi_0} L(1,\chi)$ is real and positive, so we may indeed take the real, positive $\phi(q)$th root here. Hence $c(q)$ is real and positive. 
\end{proof}

We now proceed with the proof of Lemma \ref{Lemma 3.3}. We will use the following estimate:

\begin{Thm}[Effective Perron formula]\label{Theorem 4.3}
 Let $F(s) \defeq \sum_{n = 1}^{\infty} a_n n^{-s}$ be a Dirichlet series with abscissa of absolute convergence $\sigma_a$. For $\kappa > \max(0,\sigma_a)$, $T \ge 1$, and $X \ge 1$, we have
\begin{align*}
 \sum_{n \le X} a_n = 
\frac{1}{2\pi i} \int_{\kappa - i\infty}^{\kappa + i\infty} F(s) \frac{{\phantom{^{s}}} X^s}{s} \, \dd{s}  +
\BigO{X^{\kappa} \sum_{n=1}^{\infty} \frac{\ab{a_n}}{n^{\kappa}(1 + T\ab{\log(X/n)})}}.
\end{align*}
If $X \ge 2$, $0 < \kappa - 1 \ll 1/\log X$, and $a_n \ll 1$ for every $n$, then
\begin{align}\label{(4.13)}
 \sum_{n \le X} a_n = 
\frac{1}{2\pi i} \int_{\kappa - iT}^{\kappa + iT} F(s) \frac{{\phantom{^{s}}} X^s}{s} \, \dd{s}  +
\BigO{1} + \BigO{(X\log X)/T}.
\end{align}
\end{Thm}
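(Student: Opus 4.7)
The plan is to prove both assertions by the classical Perron argument, whose engine is an asymptotic for a discontinuous Mellin-type integral. I read the first displayed integral as truncated at height $T$, since the presence of $T$ in the error term forces this (the integral over the full vertical line is not absolutely convergent in general); the first identity then follows from integrating termwise against the Dirichlet series, and the second from bounding the resulting error sum under the specialized hypotheses.

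The key lemma is that, for $y > 0$, $\kappa > 0$, and $T \ge 1$,
\begin{align*}
\frac{1}{2\pi i}\int_{\kappa - iT}^{\kappa + iT} \frac{y^s}{s}\,\dd{s} = \delta(y) + \BigO{\frac{y^{\kappa}}{1 + T\ab{\log y}}},
\end{align*}
where $\delta(y) = 1$ for $y > 1$, $\delta(y) = 0$ for $0 < y < 1$, and $\delta(1) = 1/2$. For $y > 1$, deform the contour to the vertical line with real part $-U$ for $U > 0$, pick up the residue $1$ of $y^s/s$ at $s = 0$, bound each horizontal segment by $\int_{-U}^{\kappa} y^{\sigma}/T\,\dd{\sigma} \ll y^{\kappa}/(T\ab{\log y})$, and let $U \to \infty$ (the left vertical piece vanishes since $y^{-U} \to 0$). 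The case $0 < y < 1$ is symmetric (shift right, no pole), and $y = 1$ is a direct computation giving $1/2 + \BigO{1/T}$. Combining the horizontal-segment estimate with the trivial bound $\ll y^{\kappa}$ yields the stated form uniformly.

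For the first assertion, since $\kappa > \sigma_a$ absolute convergence on $\sigma = \kappa$ permits interchanging sum and truncated integral:
\begin{align*}
\frac{1}{2\pi i}\int_{\kappa - iT}^{\kappa + iT} F(s)\frac{X^s}{s}\,\dd{s} = \sum_{n=1}^{\infty} a_n \cdot \frac{1}{2\pi i}\int_{\kappa - iT}^{\kappa + iT} \frac{(X/n)^s}{s}\,\dd{s}.
\end{align*}
Applying the lemma with $y = X/n$ to each term produces $\sum_{n < X} a_n$ (plus $a_X/2$ when $X$ is a positive integer) together with the error sum appearing on the right of the first identity. When $X$ is an integer, the $n = X$ summand contributes $\ab{a_X}$ to the error term (since $\ab{\log(X/n)} = 0$ there), which already absorbs the $a_X/2$ discrepancy between $\sum_{n < X} a_n$ and $\sum_{n \le X} a_n$; this proves the first identity.

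For \eqref{(4.13)}, I specialize using $\ab{a_n} \ll 1$ and $0 < \kappa - 1 \ll 1/\log X$, so that $X^{\kappa} \asymp X$ and $\zeta(\kappa) \asymp 1/(\kappa - 1) \ll \log X$. Split the error sum into three ranges: (i) $n \le X/2$ or $n \ge 2X$, where $\ab{\log(X/n)} \ge \log 2$ and the total contribution is at most $X^{\kappa}\zeta(\kappa)/T \ll X\log X/T$; (ii) integers with $X/2 < n < 2X$ and $\ab{n - X} \ge 1/2$, where $n^{\kappa} \asymp X$ and $\ab{\log(X/n)} \asymp \ab{n - X}/X$, so the contribution is bounded by $\sum_{1/2 \le k \le X/2}(1 + Tk/X)^{-1} \ll (X/T)\log X$ (split the $k$-sum at $k = X/T$); and (iii) the at most two integers within distance $1/2$ of $X$, each contributing $\ll 1$ and accounting for the $\BigO{1}$ term. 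Summing these gives the desired bound. The main obstacle is really just the case-analysis in (iii): one must allow $X$ to be arbitrarily close to an integer, making $\ab{\log(X/n)}$ as small as one likes, so the $\BigO{1}$ term is intrinsic. Everything else is standard, modulo the careful residue calculation for $y^s/s$ and the interchange of sum and integral on the truncated contour.
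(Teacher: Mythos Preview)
Your proof is correct and follows essentially the same route as the paper: the paper simply cites Tenenbaum for the first assertion (your reading of the integral as truncated at height $T$ is the intended one), while for \eqref{(4.13)} both you and the paper split the error sum into the ranges $n \le X/2$, $X/2 < n \le 2X$, $n > 2X$, isolate the integer(s) nearest $X$ for the $O(1)$ term, and bound the rest by $O((X\log X)/T)$. The only cosmetic difference is that you handle the two tails together via $\zeta(\kappa) \ll \log X$, whereas the paper treats them separately.
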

\begin{proof}
The first estimate is the effective Perron formula (see \cite[II \S 2.1, Theorem 2]{T1995}). From this, assuming $X \ge 2$, $0 < \kappa - 1 \ll 1/\log X$, and $a_n \ll 1$ for every $n$, we deduce \eqref{(4.13)} as follows. We partition the sum of the first $\Oh$-term into three, according as $n \le X/2$, $X/2 < n \le 2X$, or $n > 2X$. Noting that $X^{\kappa} \ll X$, we have
\begin{align*}
 X^{\kappa} \sum_{n \le X/2} \frac{\ab{a_n}}{n^{\kappa}(1 + T\ab{\log(X/n)})} \ll
\frac{X}{T} \sum_{n \le X} \frac{1}{n} \ll \frac{X\log X}{T},
\end{align*}
and
\begin{align*}
X^{\kappa} \sum_{n > 2X} \frac{\ab{a_n}}{n^{\kappa}(1 + T\ab{\log(X/n)})} \ll 
\frac{X}{T}\sum_{n > X} \frac{1}{n^{\kappa}} \ll 
\frac{X}{T} \int_{X}^{\infty} \frac{\dd t}{t^{\kappa}} \ll 
\frac{X}{T(\kappa - 1)X^{\kappa - 1}} \ll 
\frac{X\log X}{T}.
\end{align*}
For $X/2 < n \le 2X$, we use
\begin{align*}
 \ab{\log (X/n)} = \abs{\log \br{1 + \frac{X-n}{n}}} >
\begin{cases}
 \displaystyle\frac{1}{2}\br{\frac{X-n}{n}} & \text{if $n < X$,} \\
& \\
\displaystyle\frac{n - X}{n} & \text{if $n > X$.}
\end{cases}
\end{align*}
We have 
\begin{align*}
X^{\kappa} \sum_{X/2 < n \le 2X} \frac{1}{n^{\kappa}(1 + T\ab{\log(X/n)})} & \le
\frac{X^{\kappa}}{(X-1/2)^{\kappa}} + 
\frac{{\phantom{^{\kappa}}}X^{\kappa}}{T} \sums{X/2 < n \le 2X}{\ab{n - X} > 1/2} \frac{n}{n^{\kappa}\ab{n - X}} \\ & \ll
1 +  \frac{X}{T} \sums{X/2 < n \le 2X}{\ab{n - X} > 1/2} \frac{1}{\ab{n - X}} \\ & \ll 
1+ \frac{X}{T} \sum_{n \le 2X} \frac{1}{n} \\ & \ll
1+ (X\log X)/T.
\end{align*}
Combining, we obtain the error term in \eqref{(4.13)}.
\end{proof}

Let us now gather some more properties of Dirichlet characters and $L$-series that will be used in the proof of Lemma \ref{Lemma 3.3}.

\begin{Thm}\label{Theorem 4.4}
There is an effectively computable positive constant $\bar{c}$ such that the following holds for any given integer $q \ge 3$. The product $\prod_{\chi \bmod q} L(s,\chi)$ has at most one zero in the region 
\begin{align*}
 \mathcal{D} \defeq 
\Br{\sigma + i\tau : \sigma \ge 1 - \frac{\bar{c}}{\log \max (q, q\ab{\tau})}}.
\end{align*}
Such a zero, if it exists, is real and simple, and corresponds to a non-principal real character. 
\end{Thm}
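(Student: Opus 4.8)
This is a classical zero-free region statement for the product $\prod_{\chi\bmod q}L(s,\chi)$, and the plan is to follow the standard route via the $3$-$4$-$1$ trigonometric inequality, being careful to track dependence on $q$. First I would recall that for $\sigma>1$ the Dirichlet series give $-\mathrm{Re}\,\frac{L'}{L}(\sigma+i\tau,\chi)=\sum_{n}\Lambda(n)\mathrm{Re}(\chi(n)n^{-\sigma-i\tau})\ge -\sum_n\Lambda(n)n^{-\sigma}\cdot(\text{something})$, and more precisely, combining over $\chi$ with the nonnegativity coming from $3+4\cos\theta+\cos2\theta\ge0$ applied to $\theta=\tau\log n+\arg\chi(n)$, one gets
\begin{align*}
-3\,\mathrm{Re}\,\frac{L'}{L}(\sigma,\chi_0)-4\sum_{\chi}\mathrm{Re}\,\frac{L'}{L}(\sigma+i\tau,\chi)-\sum_{\chi}\mathrm{Re}\,\frac{L'}{L}(\sigma+2i\tau,\chi)\ge 0
\end{align*}
after summing the $\chi$-sums against all characters (the orthogonality relation \eqref{(4.3)} converts the double sum into a sum over prime powers $\equiv 1\bmod q$ with nonnegative coefficients). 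The point of grouping by the full product $\prod_\chi L(s,\chi)$ is precisely that this combined log-derivative is a Dirichlet series with nonnegative coefficients.

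**The main estimates.** Next I would insert the standard upper bounds. Near $s=1$, $\zeta(s)$ has a simple pole (by \eqref{(4.2)}), hence $-\mathrm{Re}\,\frac{L'}{L}(\sigma,\chi_0)\le \frac{1}{\sigma-1}+O(\log q)$ for $1<\sigma\le 2$; the $\log q$ accounts for the finitely many Euler factors at $p\mid q$ in \eqref{(4.1)}. For a putative zero $\rho=\beta+i\gamma$ of some $L(s,\chi_1)$ with $\beta$ close to $1$, one has $-\mathrm{Re}\,\frac{L'}{L}(\sigma+i\gamma,\chi_1)\le -\frac{1}{\sigma-\beta}+O(\log(q(|\gamma|+2)))$ via the Hadamard-factorization / Borel–Carathéodory bound $\frac{L'}{L}(s,\chi)=\sum_\rho\frac1{s-\rho}+O(\log q(|\tau|+2))$ valid in a bounded region to the right of and near $1$; for all other characters $\chi$ and all $\tau$, one just uses $-\mathrm{Re}\,\frac{L'}{L}(\sigma+i\tau,\chi)\le O(\log q(|\tau|+2))$. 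Feeding these into the trigonometric inequality at the point $\sigma=1+\delta$, $\tau=\gamma$ yields
\begin{align*}
\frac{4}{\sigma-\beta}\le \frac{3}{\sigma-1}+O(\log\max(q,q|\gamma|)),
\end{align*}
and the usual choice $\sigma-1=\frac{K}{\log\max(q,q|\gamma|)}$ for a suitable absolute constant $K$ forces $\beta\le 1-\frac{\bar c}{\log\max(q,q|\gamma|)}$ for some absolute $\bar c>0$, i.e. no such zero lies in $\mathcal D$. This kills all zeros off the real axis and all zeros coming from complex (or principal) characters; for the case $\gamma=0$ with a complex character $\chi_1$, one uses $\chi_1\ne\bar\chi_1$ so that both $L(s,\chi_1)$ and $L(s,\bar\chi_1)$ would vanish at $\beta$, effectively doubling the $4$-term and again giving a contradiction. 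What survives is at most one zero, necessarily real, simple, from a non-principal real character — the Siegel zero.

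**The uniformity and the obstacle.** The assertion that such an exceptional zero is \emph{simple} and that there is \emph{at most one} among all real non-principal characters is the part needing the most care: simplicity follows because a double real zero of $L(s,\chi_1)$ would contribute $\frac{2}{\sigma-\beta}$ to $-\mathrm{Re}\,\frac{L'}{L}(\sigma,\chi_1)$ and hence $\frac{8}{\sigma-\beta}$ after the factor $4$, which the same argument rules out; and two distinct exceptional characters $\chi_1,\chi_2$ are excluded by applying the nonnegativity of a different combination — examining $L(s,\chi_0)L(s,\chi_1)L(s,\chi_2)L(s,\chi_1\chi_2)$ and using that its logarithmic derivative again has nonnegative Dirichlet coefficients (this is the classical Landau argument), which forces the two real zeros apart. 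I would present this as the single genuinely delicate point; everything else is a routine, if lengthy, bookkeeping of the $L'/L$ bounds with explicit $q$-dependence. I expect the main obstacle to be stating the Borel–Carathéodory / Hadamard-product input in a form uniform in $q$ over the relevant range while keeping all implied constants absolute, so that $\bar c$ comes out effectively computable; once that is in place, the trigonometric inequality does the rest mechanically.
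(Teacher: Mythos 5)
The paper does not prove Theorem~\ref{Theorem 4.4}; it simply cites Davenport \cite[Chapter 14]{D2000} for the result and Kadiri \cite{K2007} for the admissible explicit constant $\bar{c}=1/6.41$. Your sketch follows the classical argument recorded there, so the overall route is the one the paper has in mind, and there is nothing in the paper itself to compare against.

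One step in the sketch would fail as written: the displayed trigonometric inequality does not have nonnegative Dirichlet coefficients. After orthogonality, the coefficient of $\Lambda(n)n^{-\sigma}$ in your expression is, for $n\equiv 1\bmod q$, equal to $3+\phi(q)\bigl(4\cos(\tau\log n)+\cos(2\tau\log n)\bigr)$, which is negative as soon as $\phi(q)\ge 2$ and $\cos(\tau\log n)$ is near $-1$. Either sum the \emph{first} term over all $\chi$ as well, so that all three carry the weight $\phi(q)$ and the coefficient becomes $\phi(q)\bigl(3+4\cos\theta+\cos 2\theta\bigr)\ge 0$; or, as is more standard, use the single-character inequality $-3\,\mathrm{Re}\,\frac{L'}{L}(\sigma,\chi_0)-4\,\mathrm{Re}\,\frac{L'}{L}(\sigma+i\tau,\chi)-\mathrm{Re}\,\frac{L'}{L}(\sigma+2i\tau,\chi^2)\ge 0$ for the particular $\chi$ carrying the hypothetical zero. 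Note it must be $\chi^2$ in the last slot: when $\chi$ is real and $\tau=0$ one has $\chi^2=\chi_0$, so a second pole appears at $s=1$, and this is precisely what lets the Siegel zero escape the argument. The rest of what you describe --- the Borel--Carath\'eodory/Hadamard bound $\frac{L'}{L}(s,\chi)=\sum_{\rho}\frac{1}{s-\rho}+O\bigl(\log(q(|\tau|+2))\bigr)$ uniform in $q$, the $\chi$-versus-$\bar\chi$ device for a real zero of a complex character, the multiplicity-$m$ variant giving simplicity, and Landau's $(1+\chi_1)(1+\chi_2)\ge 0$ nonnegativity giving at most one exceptional real character --- is the standard bookkeeping and matches the proof the paper cites.
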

\begin{proof}
See \cite[Chapter 14]{D2000}. As far as explicit constants $\bar{c}$ go, the best result is due to Kadiri \cite[Theorem 1.1]{K2007}, who has shown that $\bar{c} = 1/6.41$ is admissible.
\end{proof}

As noted earlier, the $L$-series $L(s,\chi)$, $\chi \ne \chi_0$, converge for $\sigma > 0$. Indeed, the P\'olya-Vinogradov inequality \cite[Chapter 23]{D2000}:
\begin{align*}
\max_{t \ge 1} \Big | \sum_{n \le t} \chi(n) \Big | \le q^{\frac{1}{2}}\log q, \quad \chi \ne \chi_0,
\end{align*}
together with partial summation, gives 
\begin{align}\label{(4.14)}
L(s,\chi) & \ll 
(q^{\frac{1}{2}}\log q)  \br{1 + \ab{s} \int_1^{\infty} \frac{\dd{t}}{t^{\sigma + 1}}}   \ll
(q^{\frac{1}{2}}\log q) \br{1 + \ab{s}/\sigma}, \quad \sigma > 0, \quad \chi \ne \chi_0.
\end{align}
Thus, the product $\prod_{\chi \bmod q} L(s,\chi)$ is analytic at every point in the region $\mathcal{D}$ of Theorem \ref{Theorem 4.4}, except for a simple pole at $s = 1$. This is due to the Riemann zeta function, which is analytic except for a simple pole at $s = 1$, where the residue is one. More precisely, we have \eqref{(4.2)}. Let us also recall here that
\begin{align}\label{(4.15)}
 \zeta(\sigma + i\tau) \ll \log\ab{\tau}, \quad \ab{\tau} \ge 2, \quad \sigma \ge 1 - c/\log\ab{\tau}.
\end{align}

By \eqref{(4.1)}, we have
\begin{align*}
\prod_{\chi \bmod q} L(s,\chi) = \zeta(s)\prod_{p \mid q}\br{1 - \frac{1}{p^s}} \prod_{\chi \ne \chi_0} L(s,\chi),
\end{align*}
and, by Theorem \ref{Theorem 4.4}, $L(1,\chi) \ne 0$ for $\chi \ne \chi_0$. In fact, we have  
\begin{align}\label{(4.16)}
\begin{split}
L(1,\chi) \gg 
\begin{cases}
q^{-\frac{1}{2}}        & \chi^2 = \chi_0, \\
(\log q)^{-7}           & \chi^2 \ne \chi_0.
\end{cases}
\end{split}
\end{align}
(See \cite[\S 11.3, Theorem 11.11]{MV2007} and \cite[\S II.8, Theorem 8]{T1995} respectively.) Furthermore, as we have already observed, as a consequence of \eqref{(4.4)}, we have $\prod_{\chi \bmod q} L(\sigma,\chi) \ge 1$ for $\sigma > 1$. 

Thus, if we define a simply connected domain
\begin{align}\label{(4.17)}
 \mathcal{D}^{*} \defeq 
\displaystyle\Br{\sigma + i\tau : \sigma \ge 1 - \frac{\bar{c}}{\log \max (q, q\ab{\tau})}} \Big\backslash
\left[1 - \frac{\bar{c}}{\log q}, 1 \right],
\end{align}
we see that the function
\begin{align*}
 \br{(s-1) \prod_{\chi \bmod q} L(s,\chi)}^{1/\phi(q)}
\end{align*}
is analytic throughout $\mathcal{D}^{*} \cup \{1\}$, whereas the function
\begin{align*}
 \br{\prod_{\chi \bmod q} L(s,\chi)}^{1/\phi(q)} = 
(s-1)^{-1/\phi(q)}\br{(s-1) \prod_{\chi \bmod q} L(s,\chi)}^{1/\phi(q)}
\end{align*}
has a branch point at $s = 1$, but is analytic throughout $\mathcal{D}^{*}$. We always choose the principal value of the complex logarithm, so that $\lim_{s \to 1}((s-1)\zeta(s))^{1/\phi(q)} = 1$, for instance.

These functions, slightly modified, feature in Lemma \ref{Lemma 4.5} below. Given an integer $q \ge 3$ and a number $Y \ge 1$, we define $\Theta(s)$, for $\sigma > 1/2$, as in \eqref{(4.6)}; then we define
\begin{align}\label{(4.18)}
 G(s) \defeq 
\Theta(s)
\br{(s-1)\prod_{\chi \bmod q} L(s,\chi)}^{1/\phi(q)}
\prods{p \le Y}{p \equiv 1 \bmod q}\br{1 - \frac{1}{p^s}},
\quad s \in \mathcal{D}^{*} \cup \{1\};
\end{align}
and
\begin{align}\label{(4.19)}
 F(s) \defeq (s-1)^{-1/\phi(q)}G(s),
\quad s \in \mathcal{D}^{*}.
\end{align}
Also, given $\sigma > 0$, we define
\begin{align}\label{(4.20)}
\Pi_1(\sigma; q) & \defeq \prod_{p \mid q} \br{1 + \frac{1}{p^{\sigma}}}, 
\quad
\Pi_2(\sigma; Y) \defeq \prods{p \le Y}{p \equiv 1 \bmod q} \br{1 + \frac{1}{p^{\sigma}}}.
\end{align}
Note that if $q \ll 1$ then $\Pi_1(\sigma; q) \ll 1$. Otherwise, since $\log(1 + x) \le x$ for $x \ge 0$, we have
\begin{align}\label{(4.21)}
\Pi_1(\sigma; q) \le
\exp\Bigg(  \sum_{p \mid q} \frac{1}{p^{\sigma}}  \Bigg)  \le
\exp\Bigg(  q^{1 - \sigma} \sum_{p \mid q} \frac{1}{p}  \Bigg)  \le
\exp\br{cq^{1 - \sigma}\log\log\log q}
\end{align}
by a standard estimate. Similarly, if $Y \ll 1$, or if $Y \le q$, then $\Pi_2(\sigma; Y) \ll 1$, and otherwise
\begin{align}\label{(4.22)}
\Pi_2(\sigma; Y) \le
\exp\Bigg(  \sum_{p \le Y} \frac{1}{p^{\sigma}}  \Bigg)  \le
\exp\Bigg(  Y^{1 - \sigma} \sum_{p \le Y} \frac{1}{p}  \Bigg)  \le
\exp\br{cY^{1 - \sigma}\log\log Y}.
\end{align}

\begin{Lem}\label{Lemma 4.5}
Fix an integer $q \ge 3$ and a number $Y \ge 1$, and let $\mathcal{D}^{*}$, $\Theta(s)$, $G(s)$, $F(s)$, $\Pi_1(\sigma;q)$, and $\Pi_2(\sigma; Y)$ be as defined in \eqref{(4.6)}, \eqref{(4.17)} --- \eqref{(4.20)}.
(a) The function $G(s)$ is analytic throughout $\mathcal{D}^{*} \cup \{1\}$. For $s \in \mathcal{D}^{*}$ with $\ab{\tau} \ge 2$,  we have
\begin{align}\label{(4.23)}
 G(s)   \ll 
(\log\ab{\tau})^{1/\phi(q)} 
(1 + \ab{\tau})^{1 - 1/\phi(q)} 
(q^{\frac{1}{2}}\log q) \Pi_1(\sigma;q) \Pi_2(\sigma;Y).  
\end{align}
For $s \in \mathcal{D}^{*} \cup \{1\}$ with $s - 1 \ll 1$, we have
\begin{align}\label{(4.24)}
G(s) \ll (q^{\frac{1}{2}}\log q) \Pi_1(\sigma;q) \Pi_2(\sigma;Y).
\end{align}
Also,
\begin{align}\label{(4.25)}
 G(s) = G(1) + 
\BigO{ \ab{s-1} (q^{\frac{1}{2}}\log q) \Pi_1(\sigma;q) \Pi_2(\sigma;Y) }.
\end{align}
(b) The function $F(s)$ is analytic at every point in $\mathcal{D}^{*}$. We have
\begin{align}\label{(4.26)}
 F(s) = \prod_{p \equiv 1 \bmod q} \br{1 - \frac{1}{p^s}}^{-1}
\prods{p \le Y}{p \equiv 1 \bmod q} \br{1 - \frac{1}{p^s}}, \quad \sigma > 1.
\end{align}
\end{Lem}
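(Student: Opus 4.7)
The plan is to handle the analyticity, the growth bounds, and the Euler-product identity in turn, treating parts (a) and (b) in parallel.

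For part (a), $\Theta(s)$ is analytic on $\{\sigma > 1/2\}$ since the double sum in its exponent is dominated by $\sum_p \sum_{m \ge 2} (m p^{m\sigma})^{-1} \ll \sum_p p^{-2\sigma} \ll 1$, hence converges uniformly on compacta. The factor $(s-1)\prod_{\chi \bmod q} L(s,\chi)$ is entire, because the simple pole of $L(s,\chi_0)$ at $s=1$ coming from $\zeta$ via \eqref{(4.1)} is cancelled by $(s-1)$. By Theorem \ref{Theorem 4.4} this entire function has no zeros in $\mathcal{D}^*$ (the possible exceptional real zero lies in the slit removed from $\mathcal{D}$), and at $s=1$ its value $(\phi(q)/q) \prod_{\chi \ne \chi_0} L(1,\chi)$ is positive by \eqref{(4.16)}. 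Hence on the zero-free, simply connected domain $\mathcal{D}^*$ the principal branch of its $\phi(q)$-th root is well-defined and analytic, and extends continuously (and then analytically) to $s=1$, where the underlying function is positive real; so $G$ is analytic on $\mathcal{D}^* \cup \{1\}$. For part (b), the slit in \eqref{(4.17)} is removed precisely so that $(s-1)^{-1/\phi(q)}$ admits a single-valued analytic branch on $\mathcal{D}^*$, whence $F(s) = (s-1)^{-1/\phi(q)} G(s)$ is analytic on $\mathcal{D}^*$.

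For \eqref{(4.23)} I would bound each factor of $G$ in turn. The absolute convergence estimate above yields $\ab{\Theta(s)} \ll 1$ on $\mathcal{D}^*$. The principal factor $L(s,\chi_0) = \zeta(s) \prod_{p \mid q}(1 - p^{-s})$ is bounded by $(\log\ab{\tau}) \Pi_1(\sigma;q)$ using \eqref{(4.15)} together with $\ab{1 - p^{-s}} \le 1 + p^{-\sigma}$. Each of the $\phi(q) - 1$ non-principal $L$-values satisfies $\ab{L(s,\chi)} \ll q^{1/2}(\log q)(1 + \ab{\tau})$ by Polya--Vinogradov \eqref{(4.14)}, since $\sigma$ is bounded below on $\mathcal{D}^*$. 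Multiplying these with $\ab{s-1} \ll 1 + \ab{\tau}$ and extracting the $\phi(q)$-th root distributes the exponents to produce the stated combination $(\log\ab{\tau})^{1/\phi(q)} (1 + \ab{\tau})^{1 - 1/\phi(q)} (q^{1/2}\log q) \Pi_1(\sigma;q)$; the remaining multiplier $\ab{\prod_{p \le Y,\, p \equiv 1 \bmod q}(1 - p^{-s})} \le \Pi_2(\sigma;Y)$ delivers \eqref{(4.23)}. The estimate \eqref{(4.24)} is the same computation specialised to $\ab{s-1} = O(1)$, where $\ab{(s-1)\zeta(s)} \ll 1$ and the non-principal $L$-values lose their $\ab{\tau}$ dependence. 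For \eqref{(4.25)}, analyticity of $G$ at $s=1$ lets one write $G(s) - G(1) = \int_1^s G'(z)\,\dd{z}$ along a short path, and Cauchy's integral formula on a fixed-radius disc centered at $s$ bounds $\ab{G'}$ by the maximum of $\ab{G}$ on the boundary, itself controlled by \eqref{(4.24)}.

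The identity \eqref{(4.26)} in part (b) is essentially a rearrangement of \eqref{(4.5)}: exponentiating that identity, valid for $\sigma > 1$, gives $\prod_{p \equiv 1 \bmod q}(1 - p^{-s})^{-1} = \Theta(s) \bigl(\prod_{\chi \bmod q} L(s,\chi)\bigr)^{1/\phi(q)}$, and substituting into $F(s) = (s-1)^{-1/\phi(q)} G(s)$ while cancelling the two powers of $(s-1)^{\pm 1/\phi(q)}$ produces \eqref{(4.26)}. The main obstacle is the careful bookkeeping of exponents in \eqref{(4.23)}: separating the mild $\log\ab{\tau}$ growth of $\zeta$ from the bulk of the $\ab{\tau}$ dependence carried by the $\phi(q)-1$ non-principal $L$-factors is conceptually routine, but one must verify that taking the $\phi(q)$-th root redistributes the powers of $\ab{\tau}$, $q$, and $\Pi_1$ correctly, and that the uniformity in $Y$ is preserved by the final multiplication by $\Pi_2(\sigma;Y)$.
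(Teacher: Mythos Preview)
Your proposal is correct and follows the paper's proof essentially step for step: the analyticity of $\Theta$, the zero-free simply connected domain argument for the $\phi(q)$-th root, the factor-by-factor bounding via \eqref{(4.14)}, \eqref{(4.15)}, and \eqref{(4.2)}, and the exponentiation of \eqref{(4.5)} for \eqref{(4.26)} all mirror the paper. The only cosmetic difference is in \eqref{(4.25)}, where the paper expands $G$ as a Taylor series about $1$ and bounds the coefficients by Cauchy's formula on a circle $\gamma(r)$, whereas you integrate $G'$ along a path and bound $G'$ by Cauchy on a disc centred at $s$; these are equivalent manoeuvres yielding the same estimate.
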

\begin{proof}
(a) For any $s$ with $\sigma \ge 1/2 + \delta > 1/2$, we have, by definition \eqref{(4.6)} of $\Theta(s)$, the bound
 \begin{align*}
\ab{\log \Theta(s)} & \le
\sum_{p} \sum_{m = 2}^{\infty} \frac{1}{p^{m \sigma}}  \le 
\sum_{p} \sum_{m = 2}^{\infty} \frac{1}{p^{m(1/2 + \delta)}}  = 
\sum_{p} \frac{1/p^{1 + 2\delta}}{1 - 1/p^{1/2 + \delta}} \\ & \le
\frac{1}{1 - 1/2^{1 + 2\delta}}\sum_{n=1}^{\infty} \frac{1}{n^{1 + 2\delta}}  \le
\frac{1}{1 - 1/2^{1/2 + \delta}} \br{ 1 + \int_{1}^{\infty} \frac{\dd t}{t^{1 + 2\delta}}    } \\  & = 
\frac{1}{1 - 1/2^{1/2 + \delta}} \br{1 + \frac{1}{2\delta}}.
 \end{align*}
Denoting this last expression by $c(\delta)$, we have $\ab{\Theta(s)} \le e^{\ab{\log \Theta(s)}} \le e^{c(\delta)}$, and likewise $1/\ab{\Theta(s)} \le e^{\ab{-\log \Theta(s)}} \le e^{c(\delta)}$. Thus, uniformly for $\sigma \ge 1/2 + \delta$, we have 
\begin{align}\label{(4.27)}
 e^{-c(\delta)} \le \ab{\Theta(s)} \le e^{c(\delta)}, 
\quad c(\delta) \defeq \frac{1}{1 - 1/2^{1/2 + \delta}} \br{1 + \frac{1}{2\delta}},
\end{align}
and so we can differentiate the series for $\log \Theta(s)$ term by term by the uniform convergence theorem. Indeed,
\begin{align*}
\frac{\dd{^k}}{\dd{s^k}} \log \Theta(s) = 
-\sum_{p \not\equiv 1 \bmod q} \sums{m=2}{p^m \equiv 1 \bmod q}^{\infty} \frac{(-\log p^m)^k}{mp^{ms}}, \quad \sigma > 1/2, \quad k \ge 1.
\end{align*}
We see that $\log\Theta(s)$, and therefore $\Theta(s) = \exp\br{\log \Theta(s)}$, is analytic throughout the half-plane $\sigma > 1/2$. 

Note in particular that \eqref{(4.27)} implies $\Theta(s) \ll 1$ on $\mathcal{D}^{*} \cup \{1\}$. By our earlier discussion, the term $(\cdots)^{1/\phi(q)}$ in the definition \eqref{(4.18)} of $G(s)$ is analytic on $\mathcal{D}^{*} \cup \{1\}$. The final product in \eqref{(4.18)}, being finite, is non-zero and uniformly bounded by $\prod_{p \le Y} (1 + p^{-1/2})$, say, for $s \in \mathcal{D}^{*} \cup\{1\}$. 

Hence $G(s)$ is analytic on $\mathcal{D}^{*} \cup \{1\}$, where by the P\'olya-Vinogradov inequality \eqref{(4.14)}, and since $\sigma \gg 1$ throughout $\mathcal{D}^{*}$, we have (recalling the definition \eqref{(4.20)} of $\Pi_2(\sigma, Y)$):
\begin{align*}
 G(s) & \ll \br{(s - 1) \textstyle\prod_{\chi \bmod q} L(s,\chi) }^{1/\phi(q)} \Pi_2(\sigma; Y) \\ & \ll
\br{(s-1)\zeta(s)\textstyle\prod_{p \mid q}\br{1 - \frac{1}{p^s}}}^{1/\phi(q)}
(q^{\frac{1}{2}}\log q)(1 + \ab{\tau})^{1 - 1/\phi(q)} \Pi_2(\sigma; Y).
\end{align*}
For $s - 1 \ll 1$, we can use \eqref{(4.2)} to obtain (recalling the definition \eqref{(4.20)} of $\Pi_1(\sigma, q)$):
\begin{align*}
 \br{(s-1)\zeta(s)\textstyle\prod_{p \mid q}\br{1 - \frac{1}{p^s}}}^{1/\phi(q)} \ll \Pi_1(\sigma; q).
\end{align*}
For $s \in \mathcal{D}^{*}$ with $\ab{\tau} \ge 2$, we use \eqref{(4.15)} to obtain
\begin{align*}
 \br{(s-1)\zeta(s)\textstyle\prod_{p \mid q}\br{1 - \frac{1}{p^s}}}^{1/\phi(q)} \ll (\log\ab{\tau})^{1/\phi(q)}\Pi_1(\sigma; q).
\end{align*}
Combining gives \eqref{(4.23)} (upon noting that $1 + \ab{\tau} \ll 1$ if $s - 1 \ll 1$), and \eqref{(4.24)}.

We may express $G(s)$ as a Taylor series in a neighbourhood of $1$. Let $s \in \mathcal{D}^{*}$ with $s - 1 \ll 1$ be given, and choose a positive number $r \ll 1$ so that $r - \ab{s-1} \gg 1$, and so that the circle $\gamma(r) \defeq  \{1 + re^{i\theta} : -\pi < \theta < \pi\}$, without the point $1 - r$, is contained in $\mathcal{D}^{*}$. By the Cauchy integral formulae, we have
\begin{align*}
 G(s) & = G(1) + \sum_{j=1}^{\infty} \frac{G^{(j)}(1)}{j!}(s-1)^j \\ & = 
G(1) + \sum_{j=1}^{\infty} \frac{(s-1)^j}{2\pi i} \int_{\gamma(r)} \frac{G(w)}{(w-1)^{j+1}} \, \dd{w} \\ & = 
G(1) + \BigO{\max_{\ab{w-1} = r} \ab{G(w)} \sum_{j=1}^{\infty} \frac{  {\phantom{^j}}  \ab{s-1}^j}{r^j}     }.
\end{align*}
 By our choice of $r$, we have
\begin{align*}
 \sum_{j=1}^{\infty} \frac{ {\phantom{^j}} \ab{s-1}^j}{r^j} = \frac{\ab{s-1}}{r - \ab{s-1}} \ll \ab{s-1}.
\end{align*}
Using \eqref{(4.24)} for $\max_{\ab{w-1} = r} \ab{G(w)}$ and combining gives \eqref{(4.25)}.

(b) By definition \eqref{(4.19)} of $F(s)$ and analyticity of $G(s)$ throughout $\mathcal{D}^{*}$, we conclude that $F(s)$ is also analytic throughout $\mathcal{D}^{*}$. It has a branch point at $s = 1$. Now, for $\sigma > 1$, we can write 
\begin{align*}
 F(s) = \Theta(s) 
\br{\prod_{\chi \bmod q} L(s,\chi)}^{1/\phi(q)}
\prods{p \le Y}{p \equiv 1 \bmod q} \br{1 - \frac{1}{p^s}},
\end{align*}
and as in \eqref{(4.3)} --- \eqref{(4.5)}, we see that, for $\sigma > 1$,
\begin{align*}
\log \Theta(s) + \frac{1}{\phi(q)}\log \br{\textstyle\prod_{\chi \bmod q} L(s,\chi)} = 
\sum_{p \equiv 1 \bmod q} \log\br{1 - \frac{1}{p^s}}^{-1}.
\end{align*}
Exponentiating and combining gives \eqref{(4.26)}.
\end{proof}

We are finally ready to prove Lemma \ref{Lemma 3.3}. The proof is an adaptation of the Selberg-Delange method, as presented in \cite[\S II.5]{T1995}.

\begin{proof}[Proof of Lemma 3.3]
Fix numbers $A > 0$ and $\alpha \in (0,\frac{1}{2})$. Let $X$ be a sufficiently large number, fix an integer $q$ satisfying $3 \le q \le (\log X)^{\alpha}$, and fix a number $Y$ satisfying $1 \le Y \le (\log X)^A$. Let
\begin{align*}
a_n \defeq
\begin{cases}
1 & \text{of $p \mid n \Rightarrow p \equiv 1 \bmod q$ and $p > Y$,} \\
0 & \text{otherwise,}
\end{cases}
\end{align*}
be the characteristic function of the integers composed only of primes that are both congruent to $1$ mod $q$ and greater than $Y$, and let
\begin{align*}
F(s) \defeq \sum_{n=1}^{\infty} \frac{a_n}{n^s} = 
\prod_{p \equiv 1 \bmod q} \br{1 - \frac{1}{p^s}}^{-1} 
\prods{p \le Y}{p \equiv 1 \bmod q} \br{1 - \frac{1}{p^s}}, \quad \sigma > 1,
\end{align*}
be its associated Dirichlet series and Euler product. Let $\kappa$ and $T$ be parameters, to be determined later, but satisfying $1 < \kappa \le 1 + 1/\log X$ and $T > 1$. By equation \eqref{(4.13)} of Theorem \ref{Theorem 4.3}, we have
\begin{align}\label{(4.28)}
\sumss{n \le X}{p \mid n \Rightarrow p \equiv 1 \bmod q}{\text{and $p > Y$}}\hspace{-5pt} 1 =
\sum_{n \le X} a_n = \frac{1}{2\pi i} \int_{\kappa - iT}^{\kappa + iT} F(s) \frac{ {\phantom{^s}} X^s}{s} \, \dd{s} + \BigO{1} + \BigO{(X\log X)/T}.
\end{align}

By Lemma \ref{Lemma 4.5} (b), $F(s)$ admits an analytic continuation to the simply connected domain $\mathcal{D}^{*}$ (defined in \eqref{(4.17)}), namely \eqref{(4.19)}:
\begin{align*}
F(s) & = (s-1)^{-1/\phi(q)}G(s) \\ & = 
\Theta(s)\br{\textstyle\zeta(s)\prod_{p \mid q}\br{1 - \frac{1}{p^s}}\prod_{\chi \ne \chi_0} L(s,\chi)}^{1/\phi(q)}
\prods{p \le Y}{p \equiv 1 \bmod q}\br{1 - \frac{1}{p^s}}, \quad s \in \mathcal{D}^{*}.
\end{align*}
Here, $G(s)$ is as in \eqref{(4.18)} (analytic throughout $\mathcal{D}^{*} \cup \{1\}$), and $\Theta(s)$ is as in \eqref{(4.6)} (analytic throughout the half-plane $\sigma > 1/2$). The Cauchy integral theorem then allows us to deform the segment of integration $[\kappa - iT, \kappa + iT]$ in \eqref{(4.28)} into a closed, rectifiable path $\mathscr{C}$, joining its end-points and lying inside $\mathcal{D}^{*}$. 

Let us define our contour $\mathscr{C}$. Let $\kappa$, $T$, and $\eta$ be such that the rectangle with corners $(1 - \eta, \pm iT)$ and $(\kappa, \pm iT)$, with the point $1 - \eta$ deleted, lies inside the simply connected domain $\mathcal{D}^{*}$. The contour $\mathscr{C}$ is this same rectangle, with the point $1 - \eta$ deleted, traversed clockwise, and with a detour taken around $s = 1$ via the truncated `Hankel' contour, $\mathscr{H}$. The contour $\mathscr{H}$ consists of the circle $\ab{s - 1} = r$ (the only condition on $r$ is that $0 < r < \kappa - 1$), excluding the point $s = 1-r$, together with the line segment $[1-\eta, 1 - r]$, traced out twice, with respective arguments $-\pi$ and $+\pi$. That is, 
\begin{align*}
\mathscr{H} \defeq [1 + \eta e^{-\pi i}, 1 + re^{-\pi i}] \cup 
\{1 + r e^{i\theta}: -\pi < \theta < \pi \} \cup 
[1 + r e^{\pi i}, 1 + \eta e^{\pi i}].
\end{align*}
We denote the left vertical line segments of $\mathscr{C}$ by $C_1$ and the horizontal line segments by $C_2$. That is, 
\begin{align*}
 C_1 & \defeq [1 - \eta - iT, 1 - \eta) \cup (1 - \eta, 1 - \eta + iT]; \\
 C_2 & \defeq [\kappa - iT, 1 - \eta - iT] \cup [1 - \eta + iT, \kappa + iT].
\end{align*}
Thus,
\begin{align*}
 \mathscr{C} = C_1 \cup C_1 \cup \mathscr{H} \cup [\kappa + iT, \kappa -iT].
\end{align*}

By Cauchy's integral theorem we have
\begin{align*}
\Br{ \int_{C_1+C_2+\mathscr{H}} - 
\int_{\kappa - iT}^{\kappa + iT} }
 F(s) \frac{{\phantom{^{s}}} X^s}{s} \, \dd{s} = 
\int_{\mathscr{C}} F(s) \frac{{\phantom{^{s}}} X^s}{s} \, \dd{s} = 0, 
\end{align*}
and so
\begin{align}\label{(4.29)}
 \frac{1}{2\pi i} \int_{\kappa - iT}^{\kappa + iT}  F(s) \frac{{\phantom{^{s}}} X^s}{s} \, \dd{s} =  
\frac{1}{2\pi i} \int_{\mathscr{H}} F(s) \frac{{\phantom{^{s}}} X^s}{s} \, \dd{s} + 
\BigO{\int_{C_1 + C_2} F(s) \frac{{\phantom{^{s}}} X^s}{s} \, \dd{s}}.
\end{align}
Let us return to the $\Oh$-term later. We will now show that
\begin{align}\label{(4.30)}
\frac{1}{2\pi i} \int_{\mathscr{H}} F(s) \frac{{\phantom{^{s}}} X^s}{s} \, \dd{s}  =
\frac{X(\log X)^{\frac{1}{\phi(q)}}}{\log X} \br{ \frac{ G(1) } { \Gamma(  1/\phi(q)  ) } + 
\BigO{\mathscr{E}_1} + \BigO{\mathscr{E}_2}},
\end{align}
where
\begin{align*}
\mathscr{E}_1 \defeq  \frac{(q^{\frac{1}{2}}\log q) \Pi_1(1 - \eta;q)\Pi_2(1 - \eta;Y)}{\log X}, \quad
\mathscr{E}_2 \defeq G(1)X^{-\eta}.
\end{align*}

On the Hankel contour $\mathscr{H}$ we have $s \asymp 1$, that is $1/s = 1 - (s-1)/s = 1 + \BigO{\ab{s-1}}$. Therefore, by \eqref{(4.24)} and \eqref{(4.25)} of Lemma \ref{Lemma 4.5} (a), for $s \in \mathscr{H}$, we have
\begin{align*}
 \frac{G(s)}{s} = G(s) + \BigO{\ab{s-1}G(s)} = G(1) + \BigO{ \ab{s-1} (q^{\frac{1}{2}}\log q) \Pi_1(\sigma;q) \Pi_2(\sigma;Y)}.
\end{align*}
Hence, since $F(s) = (s-1)^{-1/\phi(q)}G(s)$, and since $\Pi_{1}(\sigma;q)$ and $\Pi_{2}(\sigma;Y)$ are bounded respectively by $\Pi_{1}(1 - \eta;q)$ and $\Pi_{2}(1 - \eta;Y)$ on $\mathscr{H}$ (recall the definitions \eqref{(4.20)}), we have
\begin{align}\label{(4.31)}
\begin{split}
& \frac{1}{2\pi i} \int_{\mathscr{H}} F(s)\frac{{\phantom{^s}}X^s}{s} \, \dd{s}   = 
\frac{G(1)}{2\pi i} \int_{\mathscr{H}} \frac{X^s}{(s-1)^{1/\phi(q)}} \, \dd{s} \\ & \hspace{50pt} + 
\BigO{ (q^{\frac{1}{2}}\log q) \Pi_1(1 - \eta;q) \Pi_2(1 - \eta;Y) \int_{\mathscr{H}} X^{\sigma} \ab{s-1}^{1 - 1/\phi(q)} \, \dd{s}}.
\end{split}
\end{align}

Via the substitution $\sigma = 1 - s$, we see that the straight line segments of $\mathscr{H}$ contribute
\begin{align*}
& \frac{G(1)}{2\pi i} \br{ -
\int_{\eta}^{r} \frac{X^{1-\sigma}}{(\sigma e^{-i\pi})^{1/\phi(q)}} \, \dd{\sigma} + 
\int_{r}^{\eta} \frac{X^{1-\sigma}}{(\sigma e^{i\pi})^{1/\phi(q)}} \, \dd{\sigma}  } \\ & \hspace{100pt} = 
\frac{G(1)\sin(\pi/\phi(q))}{\pi}  \int_{r}^{\eta} X^{1-\sigma}\sigma^{-1/\phi(q)} \, \dd{\sigma}
\end{align*}
to the main term on the right-hand side of \eqref{(4.31)}. The circle contributes
\begin{align*}
\frac{G(1)}{2\pi i} 
\int_{-\pi}^{\pi} \frac{X^{1 + re^{i\theta}} } {(re^{i\theta})^{1/\phi(q)}} \, ire^{i\theta} \dd{\theta}  
\le G(1) X^{1+r} r^{1 - 1/\phi(q)}  
\ll G(1)Xr^{1 - 1/\phi(q)},
\end{align*}
because $r < \kappa - 1 < 1/\log X$. As all of this holds for arbitrarily small $r$, we conclude that 
\begin{align*}
\frac{G(1)}{2\pi i} \int_{\mathscr{H}} \frac{X^s}{(s-1)^{1/\phi(q)}} \, \dd{s} & =
\frac{G(1)\sin(\pi/\phi(q))}{\pi}   \int_{0^+}^{\eta} X^{1-\sigma}\sigma^{-1/\phi(q)} \, \dd{\sigma} \\ & =
\frac{G(1)\sin(\pi/\phi(q))}{\pi} \cdot \frac{X(\log X)^{ \frac{1}{\phi(q)} } }{\log X}
  \int_{0^+}^{\eta\log X} e^{-u}u^{1/\phi(q)} \, \dd{u},
\end{align*}
after the substitution $u = \sigma\log X$. We can approximate the last integral by $\Gamma(1 - 1/\phi(q))$:
\begin{align*}
\int_{0^+}^{\eta\log X} e^{-u}u^{-1/\phi(q)} \, \dd{u} & = 
\Br{\int_{0^+}^{\infty} - \int_{\eta\log X}^{\infty}} e^{-u}u^{-1/\phi(q)} \, \dd{u} \\ & =
\br{\textstyle\Gamma\br{1 - \frac{1}{\phi(q)}} + \BigO{X^{-\eta}}}.
\end{align*}
Combining and using the identity $\Gamma(\theta)\Gamma(1 - \theta) = \pi/\sin(\pi\theta)$, we obtain
\begin{align}\label{(4.32)}
 \frac{G(1)}{2\pi i} \int_{\mathscr{H}} \frac{X^s}{(s-1)^{1/\phi(q)}} \, \dd{s}  =
\frac{X(\log X)^{ \frac{1}{\phi(q)} } }{\log X} \br{\frac{G(1)}{\Gamma(1/\phi(q))} + \BigO{G(1)X^{-\eta}}}.
\end{align}

Similarly, to the integral over $\mathscr{H}$ in the $\Oh$-term of \eqref{(4.31)}, each straight line segment contributes
\begin{align*}
\int_{1 - \eta}^{1 - r} X^{\sigma} \ab{s-1}^{1 - 1/\phi(q)} \dd{s} & \le 
\int_{1 - \eta}^{1} X^{\sigma}(1 - \sigma)^{1 - 1/\phi(q)} \dd{\sigma} \\ & =
\frac{X(\log X)^{ \frac{1}{\phi(q)} } }{(\log X)^2} \int_{0}^{\eta\log X} e^{-u} u^{1 - 1/\phi(q)} \dd{u} \\ & = 
\frac{X(\log X)^{ \frac{1}{\phi(q)} } }{(\log X)^2} \br{ \Gamma(2 - 1/\phi(q)) + X^{-\eta}\eta^{1 - 1/\phi(q)}  }. 
\end{align*}
(We used the substitution $u = (1 - \sigma)\log X$ and approximated the resulting integral by $\Gamma(2 - 1/\phi(q))$.) 
To the integral over $\mathscr{H}$ in the $\Oh$-term of \eqref{(4.31)}, the circle $\ab{s-1} = r$ contributes at most
\begin{align*}
 X^{1 + \sigma - 1} \int_{\ab{s-1} = r} r^{1 - 1/\phi(q)} \ab{\dd{s}} \le 2\pi X^{1+r} r^{2 - 1/\phi(q)} \ll X r^{2 - 1/\phi(q)}.
\end{align*}
Letting $r$ tend to zero and combining gives
\begin{align}\label{(4.33)}
 \int_{\mathscr{H}} X^{\sigma} \ab{s-1}^{1 - 1/\phi(q)} \, \dd{s} \ll 
\frac{X(\log X)^{ \frac{1}{\phi(q)} } }{\log X}\cdot \frac{1}{\log X}.
\end{align}
Putting \eqref{(4.33)} and \eqref{(4.32)} into \eqref{(4.31)} gives \eqref{(4.30)}.

Let us turn now to the integral over $C_1 + C_2$ in the $\Oh$-term of \eqref{(4.29)}. First note that $\Pi_1(\sigma;q)$ and $\Pi_2(\sigma;Y)$ (defined in \eqref{(4.20)}), attain their maximums on $\mathscr{C}$ when $\sigma = 1 - \eta$. On $C_1$, we use \eqref{(4.23)} to bound the integrand for $\ab{\tau} \ge 2$, and \eqref{(4.24)}, plus the fact that $\ab{s-1} \ge \eta$ on $C_1$, to bound the integrand for $\ab{\tau} \le 2$ (that is $s - 1 \ll 1$). Thus, 
\begin{align*}
F(s) = (s-1)^{-1/\phi(q)}G(s) \ll 
\begin{cases}
T(\log T) (q^{\frac{1}{2}}\log q)  \Pi_1(1 - \eta;q)\Pi_2(1 - \eta;Y) & \text{if $\ab{\tau} \ge 2$,} \\
\eta^{-1} (q^{\frac{1}{2}}\log q)  \Pi_1(1 - \eta;q)\Pi_2(1 - \eta;Y) & \text{if $\ab{\tau} \le 2$.}
\end{cases}
\end{align*}
We see that
\begin{align}\label{(4.34)}
\begin{split}
 & \int_{C_1} F(s) \frac{{\phantom{^{s}}} X^s}{s} \, \dd{s} \\ & \hspace{30pt} \ll 
X^{1 - \eta} \br{  \eta^{-1} + T\log T \int_{2}^{T}  \frac{\dd{\tau}}{\tau} }  
(q^{\frac{1}{2}}\log q) \Pi_1(1 - \eta;q)\Pi_2(1 - \eta;Y) \\ & \hspace{30pt} \ll
X^{1 - \eta}\br{\eta^{-1} + T(\log T)^2} (q^{\frac{1}{2}}\log q)\Pi_1(1 - \eta;q)\Pi_2(1 - \eta;Y).
\end{split}
\end{align}
On $C_2$ we have $\ab{s} \asymp T$, so we just use \eqref{(4.23)} and the fact that $\ab{X^s} \le X^{\kappa} \ll X$ ($\kappa \le 1 + 1/\log X$):
\begin{align}\label{(4.35)}
\begin{split}
& \int_{C_2} F(s) \frac{{\phantom{^{s}}} X^s}{s} \, \dd{s} \\ & \hspace{30pt} \ll
(\eta + \kappa) \frac{X}{T} (\log T)^{1/\phi(q)} T^{1 - 1/\phi(q)} 
(q^{\frac{1}{2}}\log q) \Pi_1(1 - \eta;q)\Pi_2(1 - \eta;Y) \\ & \hspace{30pt} \ll
(\eta + \kappa - 1) X \br{\frac{\log T}{T}}^{1/\phi(q)} (q^{\frac{1}{2}}\log q) \Pi_1(1 - \eta;q)\Pi_2(1 - \eta;Y).
\end{split}
\end{align}

We are now ready to choose our parameters $\kappa$, $T$, and $\eta$. With $\bar{c}$ as in Theorem \ref{Theorem 4.4}, and as in the definition \eqref{(4.17)} of $\mathcal{D}^{*}$, we set
\begin{align*}
\kappa \defeq 1 + \frac{1}{\log X}, \quad T \defeq \exp\br{\textstyle\frac{1}{4}(\bar{c}\log X)^{1/2}}, \quad
\eta \defeq \frac{\bar{c}}{2\log T} = \frac{2\bar{c}}{(\bar{c}\log X)^{1/2}}.
\end{align*}
Straightforward calculations reveal that, for all sufficiently large $X$,
\begin{align}\label{(4.36)}
\begin{split}
 X^{1 - \eta}\br{\eta^{-1} + T(\log T)^2} & = 
\exp\br{-\textstyle\frac{7}{4}(\bar{c}\log X)^{1/2} + \log\log X + \BigO{1}} \\ & \le  
X\exp\br{-\textstyle\frac{3}{2}(\bar{c}\log X)^{1/2}},
\end{split}
\end{align}
 and
\begin{align}\label{(4.37)}
\begin{split}
 (\eta + \kappa - 1) X \br{\frac{\log T}{T}}^{1/\phi(q)} & \ll 
\eta X \exp\br{ - \frac{(\bar{c} \log X)^{1/2}}{4\phi(q)} + \frac{\log\log X}{\phi(q)} + \BigO{1} } \\ & \ll
 X \exp\br{ - \frac{(\bar{c} \log X)^{1/2}}{5\phi(q)} }.
\end{split}
\end{align}

Putting \eqref{(4.37)} into \eqref{(4.35)}, and \eqref{(4.36)} into \eqref{(4.34)}, then putting the resulting bounds, as well as \eqref{(4.30)}, into \eqref{(4.29)}, we obtain
\begin{align}\label{(4.38)}
\begin{split}
 & \frac{1}{2\pi i} \int_{\mathscr{H}} F(s) \frac{{\phantom{^{s}}} X^s}{s} \, \dd{s} \\ & \hspace{50pt} =
\frac{X(\log X)^{\frac{1}{\phi(q)}}}{\log X} \br{\frac{G(1)}{\Gamma(1/\phi(q))} +
\BigO{ \mathscr{E}_1} + \BigO{  \mathscr{E}_2 } + \BigO{  \mathscr{E}_3} + \BigO{ \mathscr{E}_4}},
\end{split}
\end{align}
where 
\begin{align*}
 \mathscr{E}_1 & = \frac{ (q^{\frac{1}{2}}\log q)  \Pi_1(1 - \eta; q)\Pi_2(1 - \eta;q)}{\log X}; \\
\mathscr{E}_2 & = G(1)X^{-\eta} \ll (q^{\frac{1}{2}}\log q)  \Pi_1(1 - \eta; q)\Pi_2(1 - \eta;q); \\
\mathscr{E}_3 & = (\log X)^{1 - 1/\phi(q)} (q^{\frac{1}{2}}\log q)  \Pi_1(1 - \eta; q)\Pi_2(1 - \eta;q)
\exp\br{-\textstyle\frac{3}{2}(\bar{c}\log X)^{1/2}} \\ & \ll
(q^{\frac{1}{2}}\log q) \Pi_1(1 - \eta; q)\Pi_2(1 - \eta;q)
\exp\br{-(\bar{c}\log X)^{1/2}}; \\
\mathscr{E}_4 & = (\log X)^{1 - 1/\phi(q)}(q^{\frac{1}{2}}\log q) \Pi_1(1 - \eta; q)\Pi_2(1 - \eta;q) 
\exp\br{ - \frac{(\bar{c} \log X)^{1/2}}{5\phi(q)} }; \\ & \ll
(q^{\frac{1}{2}}\log q) \Pi_1(1 - \eta; q)\Pi_2(1 - \eta;q) 
\exp\br{ - \frac{(\bar{c} \log X)^{1/2}}{6\phi(q)} }.
\end{align*}
(We used \eqref{(4.24)} to bound $\mathscr{E}_2$.)

Now $\phi(q)$ may be of the same order as $q$, for instance if $q$ is prime, but as $q \le (\log X)^{\alpha}$ for some $\alpha \in (0,\frac{1}{2})$, the largest error term here, for all sufficiently large $X$, is $\BigO{\mathscr{E}_1}$. Thus
\begin{align}\label{(4.39)}
 \BigO{\mathscr{E}_1} + \BigO{\mathscr{E}_2} + \BigO{\mathscr{E}_3} + \BigO{\mathscr{E}_4} =
  \BigO{\mathscr{E}_1} = \BigO{\frac{q^{\frac{1}{2}}(\log\log X)^c}{\log X}}.
\end{align}
This bound holds uniformly for $1 \le q \le (\log X)^{\alpha}$ and $1 \le Y \le (\log\log X)^A$ ($A > 0$ given), by \eqref{(4.21)} and \eqref{(4.22)}.

Note that
\begin{align}\label{(4.40)}
G(1) = c(q)\prods{p \le Y}{p \equiv 1 \bmod q} \br{1 - \frac{1}{p}},
\end{align}
where 
\begin{align*}
c(q) \defeq \Theta(1)\br{\frac{\phi(q)}{q} \prod_{\chi \bmod q} L(1,\chi)}^{1/\phi(q)}
\end{align*}
is the constant of Lemma \ref{Lemma 3.2}, defined in \eqref{(4.12)}. We have $\Theta(1) \ll 1$ by \eqref{(4.27)}; we have $\Gamma(1/\phi(q)) \ll \phi(q)$; we also have the lower bounds for $L(1,\chi)$ in \eqref{(4.16)}; and so
\begin{align*}
\frac{q^{\frac{1}{2}}\Gamma(1/\phi(q))}{c(q)} & \ll 
q^{\frac{1}{2}} \phi(q)\br{\frac{\phi(q)}{q} \cdot q^{\frac{1}{2}(\phi(q)-1)} }^{1/\phi(q)} \ll
\phi(q)^{1 + 1/\phi(q)} q^{1 - 1/\phi(q)} \ll
q^{2}.
\end{align*}
Since, for $Y \le (\log X)^A$, we also have
\begin{align*}
\prod_{p \le Y} \br{1 - \frac{1}{p}}^{-1} \ll \log Y \le A\log\log X \ll (\log\log X)^c,
\end{align*}
combining \eqref{(4.40)} and \eqref{(4.39)} with \eqref{(4.38)} gives
\begin{align}\label{(4.41)}
\begin{split}
 & \frac{1}{2\pi i} \int_{\mathscr{H}} F(s) \frac{{\phantom{^{s}}} X^s}{s} \, \dd{s}  \\ & \hspace{30pt} =
 \br{\frac{c(q)}{\Gamma(1/\phi(q))} + \BigO{\frac{q^{\frac{1}{2}}(\log\log X)^c}{\log X}} }
 \frac{X(\log X)^{\frac{1}{\phi(q)}}}{\log X}  \hspace{-5pt} 
 \prods{p \le Y}{p \equiv 1 \bmod q} \hspace{-5pt} \br{1 - \frac{1}{p}} \\ & \hspace{30pt} =
 \br{1 + \BigO{\frac{q^{2}(\log\log X)^c}{\log X}} } 
 \frac{c(q)}{\Gamma(1/\phi(q))} \cdot 
 \frac{X(\log X)^{\frac{1}{\phi(q)}}}{\log X}  \hspace{-5pt} \prods{p \le Y}{p \equiv 1 \bmod q} \hspace{-5pt} \br{1 - \frac{1}{p}}.
 \end{split}
\end{align}

Putting \eqref{(4.41)} into \eqref{(4.28)}, where the $\Oh$-term on the right-hand side is 
\begin{align*}
\BigO{1} + \BigO{(X\log X)/T} = \BigO{Xe^{-c\sqrt{\log X}}},
\end{align*}
we obtain
\begin{align*}
\sumss{n \le X}{p \mid n \Rightarrow p \equiv 1 \bmod q}{\text{and $p > Y$}}\hspace{-5pt} 1 & =
\br{1 + \BigO{\frac{q^{2}(\log\log X)^c}{\log X}} } \\ & \hspace{100pt} \times
 \frac{c(q)}{\Gamma(1/\phi(q))} \cdot 
 \frac{X(\log X)^{\frac{1}{\phi(q)}}}{\log X}  \hspace{-5pt} \prods{p \le Y}{p \equiv 1 \bmod q} \hspace{-5pt} \br{1 - \frac{1}{p}}.
\end{align*}
Noting that $q^2/\log X \le (\log X)^{2\alpha}/\log X$, Lemma \ref{Lemma 3.3} is proved.
\end{proof}

\section{Acknowledgements}\label{Acknowledgements}

I am grateful to Professor G\'erald Tenenbaum for pointing out that Lemma \ref{Lemma 3.3} is amenable to the Selberg-Delange method, and for providing references; to Pankaj Vishe for helpful discussions regarding some of the details of the proof; and to Daniel Fiorilli for reading and commenting on this manuscript.

\bibliographystyle{article}

\end{document}